\newtheorem{prop}{Proposition}[section]
\newtheorem{thm}[prop]{Theorem}
\newtheorem{lemma}[prop]{Lemma}
\newtheorem{cor}[prop]{Corollary}
\newtheorem{Defi}[prop]{Definition}
\newtheorem{Rem}[prop]{Remark}
\newtheorem{Exam}[prop]{Example}
\newtheorem{Exer}[prop]{Exercise}
\newtheorem{Expe}[prop]{Experiment}
\newtheorem{Cons}[prop]{Construction}
\newtheorem{Alg}[prop]{Algorithm}
\newtheorem{Prob}[prop]{Research Problem}
\newenvironment{defi}{\begin{Defi} \rm}{\end{Defi}}
\newenvironment{rem}{\begin{Rem} \rm}{\end{Rem}}
\newcommand{\bc }{{\bf c}}
\newcommand{\be }{{\bf e}}
\newcommand{\br }{{\bf r}}
\newcommand{\bs }{{\bf s}}
\newcommand{\bx }{{\bf x}}
\newcommand{\by }{{\bf y}}
\newcommand{\ff}{{\mathbb F}}
\newcommand{\ffa}{{\bar{\mathbb F}}}
\newcommand{\complex}{{\mathbb C}}
\newcommand{\proj}{{\mathbb P}}
\newcommand{\fq}{{\mathbb F}_q}
\newcommand{\fqtwo}{{\mathbb F}_{q^2}}
\newcommand{\fqm}{{\mathbb F}_{q^m}}
\newcommand\cc{\mathcal{C}}
\newcommand\cl{\mathcal{L}}
\newcommand\cm{\mathcal{M}}
\newcommand\co{\mathcal{O}}
\newcommand\cp{\mathcal{P}}
\newcommand\cx{\mathcal{X}}
\newcommand\C{{\cal C}}
\newcommand\E{{\cal E}}
\newcommand\M{{\cal M}}
\newcommand\N{{\cal N}}
\newcommand\X{{\cal X}}
\newcommand\Y{{\cal Y}}
\newcommand\bg{{\mathbf \Gamma}}
\newcommand\tr{\mathrm{tr}}
\title{The extended coset leader weight enumerator of a twisted cubic code}
\author{Aart Blokhuis\thanks{Aart Blokhuis, Department of Mathematics and Computer Science, Eindhoven University of Technology, The Netherlands, a.blokhuis@TUE.nl},
Ruud Pellikaan\thanks{Ruud Pellikaan, Department of Mathematics and Computer Science, Eindhoven University of Technology, The Netherlands, g.r.pellikaan@tue.nl},
Tam\'as Sz\H{o}nyi\thanks{Tam\'as Sz\H{o}nyi, Department Computer Science, E\"{o}tv\"{o}s Lor\'and University, Hungary, tamas.szonyi@ttk.elte.hu}}
\date{}
\begin{document}
\maketitle

\begin{abstract}
The extended coset leader weight enumerator of the generalized Reed-Solomon $[q+1,q-3,5]_q$ code is computed.
The computation is considered as a question in finite geometry.
For this we need the classification of the points, lines and planes in the projective three space under projectivities that leave the twisted cubic invariant.
A line in three space determines a rational function of degree at most three and vice versa.
Furthermore the double point scheme of a rational function is studied.
The pencil of a true passant of the twisted cubic, not in an osculation plane gives a curve of genus one as double point scheme.
With the Hasse-Weil bound on $\fq$-rational points we show that there is a $3$-plane containing the passant.
\end{abstract}

\section{Introduction}
In general the computation of the weight enumerator of a code is hard and even harder so for the coset leader weight enumerator.
Generalized Reed-Solomon codes are MDS, so their weight enumerators are known and its formulas depend only on the size of the finite field,
the length and the dimension of the code.
The coset leader weight enumerator of an MDS code depends on the geometry of the associated projective system of the dual code.
The coset leader weight enumerator of the $\fq$-ary generalized Reed-Solomon codes of length $q+1$ of codimension four is considered,
so its associated projective systems are normal rational curves.
Moreover the coset leader weight enumerators of the extensions of these codes over $\fqm $ are determined.
In case of the $[q+1,q-2,4]_q$ code where the associated projective system consists of the $q+1$ points of an irreducible plane conic,
the answer \cite{jurrius:2015} depends on whether the characteristic is odd or even.
If the associated projective system of the $[q+1,q-3,5]_q$ code consists of the $q+1$ points of a twisted cubic,
the answer is the main result of this paper and depends on $q$ modulo $6$.\\
This result heavily depends on the classifications of the points, lines and planes in $\proj ^3$ under projectivities that leave the twisted cubic invariant.
The classification of points and planes was done in \cite{bruen:1977, hirschfeld:1985} but that classification of the lines is not fine enough for our purpose.
The point-plane incidence matrix is computed and applied to multiple covering codes by \cite{bartoli:2020},
but we need to know whether a given line is contained in a $3$-plane. This is is the main result of this paper.
The refined classification and the plane-line incidence was computed in recent papers by \cite{davydov:2021a,davydov:2021b}.
They computed, except for the class $\co_6$ (true passants not in an osculation plane) the number of times
a line of a given class is contained in a plane of a given class.\\
In our approach we use the relation between rational functions and codimension two subspaces is.
Furthermore the double point scheme $\E_{\varphi} $ of a rational function $\varphi$ is studied in general.
If the rational function $\varphi$ is a separable simple morphism of degree $d$, then $\E_{\varphi} $ is an absolutely irreducible  curve of genus $(d-1)^2$.
In particular the pencil of planes containing a given line, that is a true passant, not in an osculating plane defines a rational function and its double point scheme is a curve of genus $1$.
With the Hasse-Weil bound it is shown that there is a $3$-plane containing a given true passant in case $q\geq 23$.

\section{The coset leader weight enumerator}

The extended coset leader weight enumerator of a code is considered and it is just our aim to determine this enumerator of the code associated to the twisted cubic.
Let $C$ be an $\fq$-linear code of length $n$. Let $\br \in\fq^n$.
The {\em weight of the coset} $\br+C$ is defined by $\mbox{wt}(\br+C)=\min\{\mbox{wt}(\br+\mathbf{c}):\mathbf{c}\in C\}$.
A \emph{coset leader} is a choice of an element $\br\in\fq^n$ of minimal weight in its coset, that is $\mbox{wt}(\br)=\mbox{wt}(\br+C)$.
Let $\alpha _i$ be the number of cosets of $C$ of weight $i$.
The \emph{coset leader weight enumerator} is the polynomial with coefficients $\alpha _i$.\\
A {\em coset leader} decoder gives as output $\br - \be$, where $\br$ is the received word and $\be $ is a chosen coset leader of the coset of $\br$.
So $\br - \be$ is a nearest codeword to $\br $, but sometimes it is not the only one.
The probability of decoding correctly by the coset leader decoder on a $q$-ary symmetric channel with cross-over probability $p$
is computed by means of the coset leader weight enumerator, see \cite[Prop. 1.4.32]{pellikaan:2017}.\\
Let $C$  be an $\fq$-linear code with {\em parameters} $[n,k,d]$,
that is of {\em length} $n$, {\em dimension} $k$ and {\em minimum distance} $d$.
Then $C\otimes \fqm$ is the $\fqm$-linear code generated by $C$ and
it is called the {\em extension code} of $C$ over $\fqm$.
The weight enumerator of such an extension code has coefficients that are polynomials in $q^m$, see \cite{jurrius:2012,jurrius:2013}.
Similarly the {\em extended coset leader weight enumerator} of $C$ has coefficients $\alpha_i(T)$ that are polynomials in $T$ such that
$\alpha_i(q^m)$ is the number of cosets of $C\otimes \fqm$ that are of weight $i$, see \cite{helleseth:1979,jurrius:2015}.
Now $\alpha_i(q^m)$ is divisible by $q^m-1$ for all $m,i\geq 1$, since the coset weight of $\br +C$ and of $\lambda\br +C\otimes \fqm $  with respect to  $C\otimes \fqm $
have the same size for all nonzero $\lambda \in \fqm$.
So also $\alpha_i(T)$ is divisible by $T-1$ for all $i\geq 1$. Define
\[
a_i(T):=\frac{\alpha_i(T)}{T-1}.
\]
Then $a_i(T)= { n \choose i}(T-1)^{i-1}$ for all $1 \leq i\leq (d-1)/2$ and $\sum _{i=0}^{n-k} \alpha_i(T)=T^{n-k}$.
So  $\sum _{i=1}^{n-k} a_i(T)=\sum_{i=0}^{n-k-1}T^i$.

\subsection{Codes versus projective systems}

Let $\fq$ be the field with $q$ elements, where $q=p^h$ for some prime $p$.
The {\em projective space} of dimension $r$ is denoted by $\proj^r$. Let $\ff $ be a field. An $\ff$-{\em rational point} of $\proj^r$ is an
equivalence class of $\ff^{r+1}\setminus \{ 0 \}$ under the equivalence relation $\bx \equiv \by $ if and only if $\bx = \lambda \by $
for some nonzero $\lambda \in \ff$. The equivalence class of $\bx =(x_0, x_1,\ldots ,x_r)$ is denoted by $(x_0: x_1:\ldots :x_r)$.
Dually a {\em hyperplane} in $\proj ^r$ given by the equation $a_0X_1+a_1X_1+\cdots +a_rX_r=0$ is denoted by $[a_0: a_1:\ldots :a_r]$.
Let $\cx  $ be a subvariety of $\proj ^r$.
Then the set of $\ff$-rational points  of $\cx $ is denoted by $\cx (\ff )$ and by $\cx (q)$ in case $\ff=\fq$.\\

Let $H$ be a {\em parity check} matrix of $C$, that is an $(n-k)\times n$ matrix such that $\bc \in C$ if and only if $H\bc^T =0$.
Hence a codeword of weight $w$ corresponds one-to-one to a linear combination of $w$ columns of a given parity check matrix adding up to zero.
The {\em syndrome} $\bs$ (with respect to H) of a received word $\br \in \fq^n$ is the column vector of length $n-k$ defined by $\bs = H\br^T$.
This gives a one-to-one correspondence between cosets and syndromes.
An element of  minimal weight in its coset corresponds one-to-one to a minimal way to write its syndrome as a linear combination of the columns of a given parity check matrix.\\
From now on we assume that the minimum distance of the code is at least $3$, so $H$ has no zero column and no two columns are dependent.
So its columns can be viewed as homogeneous coordinates of $n$ distinct points in projective space of dimension $n-k-1$.\\
More generally, let $H$ be a $l\times n$ matrix of rank $l$ with elements from $\fq$.
We view the columns of $H$ as a {\em projective system} \cite[\S 8.3.2]{pellikaan:2017},
that is a set $\cp$ of $n$ points in projective space $\proj^r(q)$, with $r=l-1$ that do not lie in a hyperplane,
in particular we assume that the columns are non-zero, and no pair is dependent.
We now want to determine $\alpha_i=\alpha_i(q)$ which is the
number of vectors in $\fq^l$ that are a linear combination of some set
of $i$ columns of $H$, but not less. More generally we want to determine
$\alpha_i(q^m)$ which is the number of vectors in $\fqm ^l$
with the same property over $\fqm$ for $i=0,\dots,l$. We think projectively, so for $i=1,\dots,r+1$ we want to determine
$a_i(q^m)$ the number of points in $\proj^{r}(\fqm)$ that lie in a projective subspace of dimension $i-1$ that intersects $\cp$
in exactly $i$ points, and not for smaller $i$.

\subsection{The problem}
We want to answer the question of the coset leader weight enumerator for the extended Reed-Solomon $[q+1,q-3,5]$ code with $4 \times (q+1)$ parity check matrix $H$
whose columns are the vectors $(1,t,t^2,t^3)$ together with $(0,0,0,1)$.
The projective system of $H$ is the normal rational curve of degree $3$ in $\proj^3(q)$, see Section \ref{RNC} in more detail.\\
So, what we want is an answer to the following questions: First for
$\proj^3(q)$ itself, but also for  $\proj^3(q^m)$:\\
$a_1$: How many points belong to the curve $\C_3(q)$? The answer is $q+1$ and also for $m$, since
in our problem $\C_3(q)$ is restricted to $\proj^3(q)$:
\[
a_1(q^m)=q+1 \ \mbox{ and } \ a_1(T)=q+1.
\]
$a_2$: How many points, not already counted under $a_1$,
are on a line containing two points of $\C_3(q)$? This is also
easy. There are $\frac12q(q+1)$ secants, each one of them contributes $q-1$ (for $m$:
$q^m-1$) points, since two secants don't intersect in a point outside $\C_3(q)$, for
that would imply four coplanar points on $\C_3(q)$. So in all cases:
\[
\textstyle  a_2(q^m)=\frac12q(q+1)(q^m-1)\ \mbox{ and } \  a_2(T)=\frac12q(q+1)(T-1).
\]
$a_3$: Now the interesting part starts, how many points are there on a $3$-plane, a plane containing three
points of $\C_3(q)$, not already counted under $a_1$ or $a_2$?
In $\proj^3(q)$ the answer is easy: the rest, so $\frac12q(q+1)^2$. Indeed a point that does
not lie on the curve or on a secant or on a $3$-plane can be used to extend the
arc, but it is well known that the arc is maximal (for $q>3$).\\
Outside $\proj^3(q)$ we argue as follows: If a point is on more than one $3$-plane,
then it must be on a line of $\proj^3(q)$, so forgetting about these points for the
moment, this means that each of the
$\frac16(q+1)q(q-1)$ different $3$-planes contributes
$q^{2m}+q^m+1-(q^2+q+1)-(q^2+q+1)(q^m-q)$ points that are
certainly in this $3$-plane only.\\
What remains is to investigate how many points there are on a line of
$\proj^3(q)$, that is not a bisecant and that is contained in a $3$-plane.\\
Equivalently, we could determine the number of lines that are not
in a $3$-plane. We already know two kinds, tangents, and imaginary chords,
but there will turn out to be many more.\\
A subspace of $\proj^3(q^m)$ (so a point, line or plane)
will be called {\em rational} if it extends a corresponding subspace in $\proj^3(q)$.\\
We start by giving the formula for $a_3(q^m)$ in terms of the parameter
$\mu_q$: the number of lines of $\proj^3(q)$ that are not real chords
and lie in one or more $3$-planes. So $a_3(q^m)$ is equal to
$$
\textstyle \frac12q(q+1)^2+ \frac16(q+1)q(q-1)\left[q^{2m}+q^m+1-(q^2+q+1)(q^m-q+1)\right]+\mu_q(q^m-q).
$$
Hence $a_3(T)$ is equal to
$$
\textstyle  \frac12q(q+1)^2+\frac16(q+1)q(q-1)\left[T^2+T+1-(q^2+q+1)(T-q+1)\right]+\mu_q(T-q).
$$
The first term counts the points $P$ in $\proj^3(q)$, not on $\C_3(q)$ that are either on a tangent of a rational point of $\C_3(q)$ or on an imaginary chord  of 
Assume that the rational point $P$ is not on $\C_3(q)$ and lies on a tangent of a rational point $Q$ of $\C_3(q)$.
Let $\pi_P$ be the projection of the curve $\C_3$ from $P$ to a plane that does not contain the tangent.
Then we get an irreducible plane cubic with  $q+1$ rational points and with a cusp singularity at $\pi_P(Q)$  which is rational.
A $3$-plane going through $P$ corresponds one-to-one by the projection to a line outside $\pi_P(Q)$ that intersects the plane cubic in three rational points.
For every two rational points $P_1$, $P_2$ on the plane cubic, not equal to $\pi_P(Q)$ there is a connecting line that intersects the plane cubic in another point $P_3$, but this point might be equal to $P_1$ or $P_2$ in case the line is tangent at $P_1$ or $P_2$, respectively.
There is one inflexion point, that is a rational point where its tangent intersect with multiplicity $3$.
So there are ${ q \choose 2} -(q-1)={ q-1 \choose 2}$ lines that intersect the plane cubic in three distinct points, none of them equal to $\pi_P(Q)$.
Hence the number of $3$-planes going through $P$ is ${ q-1 \choose 2}$.
If the rational point $P$ lies on an imaginary chord of the place $Q$ of degree $2$, then again we consider the  projection $\pi_P$ of the curve $\C_3$ from $P$ to a plane that does not contain the chord.
We get an irreducible plane cubic with  $q+2$ rational points, $q+1$ are projections of the rational points of $\C_3$ and one is $\pi_P(Q)$ which is a rational ordinary double point with two conjugated imaginary tangents that correspond to the two conjugated imaginary points of $Q$.
A $3$-plane going through $P$ corresponds one-to-one by the projection to a line outside $\pi_P(Q)$ that intersects the plane cubic in three rational points.
There is one inflexion point, that is a rational point where its tangent intersect with multiplicity $3$.
So there are ${ q +1 \choose 2} -q={ q \choose 2}$ lines that intersect the plane cubic in three distinct points, none of them equal to $Q$.
Hence the number of $3$-planes going through $P$ is ${ q \choose 2}$.
The second term is the number of points outside $\proj^3(q)$, in a rational plane, but not on a rational line.
The third term is the number of points outside $\proj^3(q)$, on a rational line that is not a real chord.\\
The rest of these notes are devoted to the determination of the value of $\mu_q$,
that turns out to depend on the value of $q$ mod $6$ and will be given in \S \ref{s-det-muq}.
In order to do that we will give the relation between rational functions and codimension two subpaces of the projective space in Proposition \ref{p-rat-fct-codim2}.
Furthermore we classify several types of lines in Theorem \ref{t-lines}.\\
Finally $a_1(T)+a_2(T)+a_3(T)+a_4(T)=T^3+T^2+T+1$. Hence $a_4(T)$ can be expressed in the known terms  $a_1(T)$, $a_2(T)$ and $a_3(T)$:
$$
a_4(T)=T^3+T^2+T+1-a_1(T)-a_2(T)-a_3(T)
$$

\section{The normal rational curve}\label{RNC}

The {\em normal rational curve}  of degree $r$ is the curve $\C_r$ in $\proj ^r$ with parametric representation
$\{(x^r:x^{r-1}y:\ldots:xy^{r-1}:y^r)\,|\,(x:y)\in \proj ^1\}$, see \cite[\S 21.1]{hirschfeld:1985}.
This map gives an isomorphism of $\proj ^1$ with $\C_r$ and the point $(x:1)$ (and $(1:0)$) on $\proj ^1$ is identified with
$(x^3:x^2:x:1)$ (and $(1:0:0:0)$) on $\C_r$ and both are denoted by $P(x)$ (and $P(\infty )$) where the context makes it clear what is meant.\\
Combinatorially the most important
property of $\C_r$ is that no $r+1$ points are in a hyperplane.
In the following we will take $l=4$ so we have the curve $\C_3$ in $\proj^3$. This curve is
also called the {\em twisted cubic}.
In this dimension the set $\C_r(q)$ is maximal
with respect to the property that no 4 points are coplanar (for $q>3$).

\subsection{The twisted cubic $\C_3$}

Almost everything in this section can be found in  \cite{bruen:1977} and \cite[Chap. 21]{hirschfeld:1985}.\\
The {\em conjugate} of $x\in \bar{\fq}$ is defined by $\bar{x}=x^q$.\\
A {\em chord} is the line joining two points of $\C_3$. We distinguish
{\em real chords}, joining two different points of $\C_3$, {\em tangents}, where the
two points coincide, and {\em imaginary chords}, where the two points are
conjugate points of the extension of $\C_3$ to $\proj^3(q^2)$.\\
An {\em axis} is the line of intersection of two osculating planes.
A {\em real axis} is the intersection of two different osculating planes,
an {\em imaginary axis} is the intersection of two osculating planes at
conjugate points of $\C_3$ in $\proj^3(q^2)$. If the two osculating planes
coincide we obtain a tangent. If $p=3$, then there is exactly one axis, the intersection of all osculating planes
and it is called the axis of $\bg_3$.\\
The {\em tangent} at the point $P(x)=(x^3:x^2:x:1)$ is the
line $\langle (x^3,x^2,x,1),(3x^2,2x,1,0)\rangle$, and in the point
$P(\infty)=(1:0:0:0)$ we have $\langle (1,0,0,0),(0,0,1,0)\rangle$.\\
A {\em passant} or {\em external line} is a line disjoint from $\C_3(q)$,
it is called {\em true} if it is {\em not} an imaginary chord.\\
A {\em unisecant} is a line intersecting $\C_3(q)$ in $1$ point, it is called
{\em true} if it is {\em not} a tangent.\\
A {\em bisecant} or simply {\em secant} is a line intersecting $\C_3(q)$ in
$2$ points (this is the same as a real chord).\\
An {\em $i$-plane}, $i=0,1,2,3$, is a plane containing $i$ points of $\C_3(q)$.\\
A subspace of $\proj^3(q^m)$ (so a point, line or plane)
will be called {\em rational} if it extends a corresponding subspace in
$\proj^3(q)$.\\
A {\em regulus} in $\proj^3(q)$ is the collection of rational lines that are {\em transversals} of three given {\em skew} lines,
that is the collection of lines that intersect three given lines that are mutually disjoint.
The regulus of three skew lines consists of $q+1$ skew lines.
The {\em complementary regulus} of the regulus of three skew lines $l_1, l_2, l_3$,
is the regulus of any three lines $l'_1, l'_2, l'_3$ in the regulus of $l_1, l_2, l_3$.\\

The group $G_q=PGL(2,q)$ of nonsingular $2\times 2$ matrices
$\begin{pmatrix} a & b \\ c & d\end{pmatrix}$ with $ad-bc\not=0$, modulo nonzero multiples of the
identity. So $G_q$ has order $(q^2-1)(q^2-q)/(q-1)=q(q^2-1)$.\\
$G_q$ acts via $\varphi (x:y) =(ax+by : cx+dy)$ on $\proj^1$, also denoted by $\varphi (x)=(ax+b)/(cx+d)$
and it acts sharply $3$-transitively on $\fq\cup\{\infty\}$.
If $\varphi \in G_{q^2}$, that is with coefficients in $\fqtwo$ we
define  the {\em conjugate} of $\varphi $ by $\bar{\varphi}(x)=(\bar{a}x+\bar{b})/(\bar{c}x+\bar{d})$.
Furthermore $G_q$ acts on $\C_3(q)$ and this gives the following map on column vectors:
\[
(x^3,x^2,x,1) \mapsto ((ax+b)^3,(ax+b)^2(cx+d),(ax+b)(cx+d)^2,(cx+d)^3).
\]
This mapping has matrix
\[
\begin{pmatrix}
a^3  & 3a^2b     & 3ab^2     & b^3 \\
a^2c & a^2d+2abc & b^2c+2abd & b^2d \\
ac^2 & bc^2+2acd &ad^2+2bcd  & bd^2 \\
c^3  & 3c^2d     &3cd^2      & d^3
\end{pmatrix},
\]
hence its action extends to a {\em linear collineation} of $\proj^3(q)$.
For $q\ge 5$, $G_q$ is the full group of {\em projectivities} in $\proj^3(q)$ fixing $\C_3(q)$ by  \cite[Lemma 21.1.3]{hirschfeld:1985}.
In \cite[p. 233]{hirschfeld:1985} the action is  on row vectors on the left, whereas
in this paper the action is on column vectors on the right,
since we consider the projective system of the code with the column vectors of the parity check matrix as its points.

\subsection{The classification of planes and points in $\proj^3$}\label{s-class-planes-points}

\begin{prop}\label{p-planes}Under $G_q$ there are five orbits $\N_i$ of planes with $n_i=|\N_i|:$\\[4pt]
$\N_1:$ Osculating planes of $\bg_3(q)$, $n_1=q+1$. \\[3pt]
$\N_2:$ Planes with exactly two points of $\C_3(q)$,  $n_2=q(q+1)$.\\[3pt]
$\N_3:$ Planes with three points of $\C_3(q)$, $n_3=\frac16q(q^2-1)$. \\[3pt]
$\N_4:$ Planes with exactly one point of $\C_3(q)$, not osculating, $n_4=\frac12q(q^2-1)$.\\[3pt]
$\N_5:$ Planes with no points of $\C_3(q)$, $n_5=\frac13q(q^2-1)$.
\end{prop}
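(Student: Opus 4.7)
The plan is to set up a $G_q$-equivariant bijection between the planes of $\proj^3$ and the binary cubic forms over $\fq$ (up to scalar), and then to classify the forms by their factorization type. Observe that the plane $H=[a_0:a_1:a_2:a_3]$ meets $\C_3$ at those $P(x)$ for which $(x:1)\in\proj^1$ is a zero of the binary cubic
\[
F_H(X,Y)\;=\;a_0X^3+a_1X^2Y+a_2XY^2+a_3Y^3,
\]
with $P(\infty)\in H$ corresponding to the zero $(1:0)$ and to $a_0=0$. This is a bijection between the $q^3+q^2+q+1$ planes and the projective space of nonzero binary cubic forms, and a direct matrix calculation, using the $4\times 4$ matrix displayed earlier for the action of $G_q$ on the column vectors $(x^3,x^2,x,1)$, shows that this bijection intertwines the $G_q$-action on planes with the standard $PGL(2,q)$-action on binary cubics. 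Moreover the multiplicity of $P(\alpha)$ in $H\cap\C_3$ is the multiplicity of $\alpha$ as a root of $F_H$, so a double root corresponds to a tangent-plane contact and a triple root to the osculating plane.

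Next I would classify binary cubics over $\fq$ by factorization type: (i) $F=L^3$; (ii) $F=L_1^2L_2$ with distinct linear factors over $\fq$; (iii) $F=L_1L_2L_3$ with three distinct $\fq$-factors; (iv) $F=LQ$ with $Q$ an irreducible quadratic over $\fq$; (v) $F$ irreducible over $\fq$. These five cases match $\N_1,\ldots,\N_5$ respectively. The counts follow from standard enumerations: $n_1=q+1$; $n_2=(q+1)q$ (ordered pair consisting of the tangent root and the transverse root); $n_3=\binom{q+1}{3}=\tfrac16 q(q^2-1)$; $n_4=(q+1)\cdot\tfrac12 q(q-1)$, using the number $\tfrac12 q(q-1)$ of monic irreducible quadratics; and $n_5=\tfrac13(q^3-q)$, using $|\proj^1(\fqthree)\setminus\proj^1(\fq)|=q^3-q$ together with the free action of $\mathrm{Gal}(\fqthree/\fq)$. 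Their sum is $(q+1)^2+q(q^2-1)=q^3+q^2+q+1$, matching the total number of planes in $\proj^3(q)$ and providing a useful sanity check.

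It then remains to show that $G_q$ acts transitively on each factorization type. For (i), (ii), (iii) this is immediate from the sharp $3$-transitivity of $G_q$ on $\proj^1(\fq)$. For (iv), after using transitivity to send the rational root to $\infty$, I would reduce to transitivity of the stabiliser $\{x\mapsto ax+b\mid a\in\fq^*,b\in\fq\}$ on Galois pairs $\{\beta,\beta^q\}\subset\fqtwo\setminus\fq$: given another pair $\{\gamma,\gamma^q\}$, the required coefficients $a=(\gamma-\gamma^q)/(\beta-\beta^q)$ and $b=\gamma-a\beta$ are forced, and both lie in $\fq$ because the differences $\beta-\beta^q$ and $\gamma-\gamma^q$ are anti-fixed by Frobenius. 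For (v), I would descend from $PGL(2,q^3)$: by sharp $3$-transitivity over $\fqthree$, the unique $\varphi$ sending $(\beta,\beta^q,\beta^{q^2})$ to $(\gamma,\gamma^q,\gamma^{q^2})$ agrees with its Frobenius conjugate $\bar\varphi$ on the same ordered triple, whence $\bar\varphi=\varphi$ and $\varphi\in G_q$.

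The main technical obstacle I anticipate is precisely this last descent step for types (iv) and (v): transitivity there is not a formal consequence of $3$-transitivity on $\proj^1(\fq)$, and one must exploit uniqueness from sharp $3$-transitivity over an extension field to force the realising projectivity to have $\fq$-rational coefficients.
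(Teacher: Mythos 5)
Your proposal is correct. The paper's own ``proof'' of this proposition is a bare citation to Corollary 4 of Chapter 21 of Hirschfeld's book, so you are supplying an argument where the paper supplies none; however, the route you take is exactly the one the paper itself sketches in the remark immediately following the proposition, where the plane $[1:c:b:a]$ is matched with the cubic $x^3+cx^2+bx+a$ and the five orbits are matched with the five factorization patterns. What you add beyond that remark is the genuinely nontrivial content: the verification that the plane--form correspondence is $G_q$-equivariant, the orbit counts with the consistency check $\sum n_i=q^3+q^2+q+1$, and above all the transitivity of $G_q$ on types (iv) and (v), which does not follow formally from $3$-transitivity on $\proj^1(\fq)$ and which you handle by a Galois descent from $PGL(2,q^2)$ and $PGL(2,q^3)$. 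That descent is the right idea and your computations for type (iv) are correct; the only point you leave implicit in type (v) is why $\bar\varphi=\varphi$ in $PGL(2,q^3)$ forces a representative matrix with entries in $\fq$ (normalize one nonzero entry to $1$, or invoke Hilbert 90), but this is a standard one-line fix and not a gap in substance.
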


\begin{proof}See Corollary 4 of Chapter 21 in \cite{hirschfeld:1985}.
\end{proof}

\begin{rem}\label{r-planes}There is another way to look at this: For the plane $[1:c:b:a]$ consider the
cubic $f(x)=x^3+cx^2+bx+a=(x-\alpha)(x-\beta)(x-\gamma)$. \\
$\N_1$:
If $\alpha=\beta=\gamma$
we have an osculating plane, where $\alpha=\infty$ corresponds to the
plane $[0:0:0:1]$, or $X_3=0$. \\
$\N_2$:
If $\alpha=\beta\ne\gamma$, we have a plane with two points.
The case $\alpha=\beta=\infty$, $\gamma=0$ corresponds to the plane
$[0:0:1:0]$ or $X_2=0$.\\
$\N_3$. If $\alpha,\beta,\gamma$ are different elements from $\fq$ we get a plane with
three points, for $\alpha=\infty$, $\beta=0$, $\gamma=1$
we get $[0:1:-1:0]$, or $X_1=X_2$.\\
$\N_4$.
If $\alpha\in\fq$, $\beta=\bar\gamma\not\in\fq$. If
$\alpha=\infty$ then we have
the plane $[0:1:-t:n]$ for some irreducible polynomial $X^2-tX+n=0$,
with $t=\beta+\bar{\beta}$ and $n=\beta \bar{\beta}$.\\
$\N_5$.
Finally if $f$ is irreducible we have a plane without points of $\C_3(q)$.
\end{rem}

At each point $P(x)=(x^3:x^2:x:1) $ of $\C_3$ we have an {\em osculating plane}
$\pi(x)=[1:-3x:3x^2:-x^3]$ and $\pi(\infty )=[0:0:0:1]$ parameterizing the {\em osculating developable} $\bg_3$.\\
If $q\ne0$ mod $3$,
so if $p\ne3$ then there is an associated {\em null-polarity}
\[
(a_0:a_1:a_2:a_3)\leftrightarrow [-a_3:3a_2:-3a_1:a_0]
\]
interchanging $\C_3$ and $\bg_3$, and their corresponding chords and axes.

\begin{prop}
Under $G_q$ there are five orbits $\M_i$ of points with $m_i=|\M_i|:$\\[4pt]
(i) If $p\ne3$, then \\ 
$\M_1:$ Points on $\C_3(q)$, $m_1=q+1$. \\[3pt]
$\M_2:$ Points off $\C_3(q)$, on a tangent, $m_2=q(q+1)$.\\[3pt]
$\M_3:$ Points on three osculating planes, $m_3=\frac16q(q^2-1)$. \\[3pt]
$\M_4:$ Points  off $\C_3(q)$, on exactly one osculating plane, $m_4=\frac12q(q^2-1)$.\\[3pt]
$\M_5:$ Points on no osculating plane, $m_5=\frac13q(q^2-1)$.\\[3pt]
(ii) If $p=3$, then \\ 
$\M_1:$ Points on $\C_3(q)$, $m_1=q+1$. \\[3pt]
$\M_2:$ Points on all osculating planes, $m_2=q+1$.\\[3pt]
$\M_3:$ Points off $\C_3(q)$, on a tangent, on one osculating plane, $m_3=q^2-1$. \\[3pt]
$\M_4:$ Points  off $\C_3(q)$, on a real chord,  $m_4=\frac12q(q^2-1)$.\\[3pt]
$\M_5:$ Points on an imaginary chord, $m_5=\frac12q(q^2-1)$.
\end{prop}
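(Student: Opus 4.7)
The plan is to verify both parts by exhibiting five $G_q$-invariant subsets, showing that $G_q$ acts transitively on each and that their cardinalities sum to $|\proj^3(\fq)|=q^3+q^2+q+1$; together with pairwise disjointness this forces the listed sets to be the full $G_q$-orbits.

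For part (i), with $p\neq 3$, I would reduce to Proposition \ref{p-planes} via the null-polarity $\sigma:(a_0{:}a_1{:}a_2{:}a_3)\leftrightarrow[-a_3{:}3a_2{:}-3a_1{:}a_0]$ defined just above the theorem. Since $G_q$ preserves $\C_3$ (and therefore $\bg_3$) and $\sigma$ is characterized up to scalar by the requirement $\sigma(\C_3)=\bg_3$, the polarity $\sigma$ must commute with $G_q$, so it induces a bijection between $G_q$-orbits of points and $G_q$-orbits of planes. For any point $P$, the pole of each osculating plane $\pi(x)$ is $P(x)\in\C_3$, and the incidence-reversing property of $\sigma$ gives
\[
\#\{\text{osculating planes through }P\}\;=\;\#\bigl(\C_3(\fq)\cap\sigma(P)\bigr).
\]
Together with the dichotomy $P\in\C_3$ or $P\notin\C_3$, this count pinpoints the plane class $\N_i$ containing $\sigma(P)$. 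For example, plugging a tangent point $P=P(x)+\lambda(3x^2,2x,1,0)$ into the equation of $\pi(y)$ gives the factorization $(x-y)^2(x-y+3\lambda)=0$, so such $P\notin\C_3$ lies on exactly two osculating planes, i.e.\ $\sigma(P)\in\N_2$; the other identifications $\sigma(\M_i)=\N_i$ follow by the same principle. Orbits, cardinalities, and transitivity then transfer automatically from Proposition \ref{p-planes}.

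For part (ii), with $p=3$, the null-polarity degenerates and one must argue directly. In characteristic three the osculating plane $\pi(x)$ simplifies to $[1{:}0{:}0{:}-x^3]$, while $\pi(\infty)=[0{:}0{:}0{:}1]$, so every osculating plane contains the axis $\ell:X_0=X_3=0$, whose $q+1$ points form $\M_2$. The tangent at $P(x)$ meets $\ell$ in the single point $(0{:}-x{:}1{:}0)$, and comparing the tangents at $P(x)$ and $P(y)$ reduces to the equation $x^3=y^3$, hence $x=y$ in characteristic three, so distinct tangents share no further point. The $q+1$ tangents then contribute $(q+1)(q-1)=q^2-1$ off-$\C_3$ off-axis points, forming $\M_3$. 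The remaining points lie on a unique chord of $\C_3$ over $\bar{\fq}$, since two chords meeting at a rational point off $\C_3(\bar{\fq})$ would force four coplanar points of $\C_3(\bar{\fq})$, contradicting the no-four-coplanar property. The $\binom{q+1}{2}$ real secants then contribute $\tfrac12 q(q^2-1)$ off-curve rational points ($\M_4$) and the $\tfrac12(q^2-q)$ imaginary chords contribute $\tfrac12 q(q^2-1)$ rational points ($\M_5$). The cardinalities sum to $q^3+q^2+q+1$. Transitivity on each class follows from the sharp $3$-transitivity of $G_q\cong PGL(2,\fq)$ on $\C_3(\fq)$, which gives transitivity on tangents, secants, and Frobenius-conjugate pairs; the stabilizer of a chord (cyclic of order $q-1$) then acts transitively on the $q-1$ off-curve off-axis rational points of the chord, and $G_q$ restricted to $\ell$ is the standard $PGL(2,\fq)$-action on $\proj^1(\fq)$.

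The main obstacle I anticipate is the chord-disjointness analysis in case (ii): one must carefully confirm, using characteristic-three arithmetic, that no tangent hits $\ell$ in more than one point, that distinct tangents are disjoint off $\C_3$, and that $\M_3,\M_4,\M_5$ are genuinely pairwise disjoint. These reduce to polynomial factorizations whose behaviour differs from the generic-characteristic patterns that underpin case (i) and therefore need separate verification.
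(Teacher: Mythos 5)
Your proposal is correct in outline, but it takes a genuinely different route from the paper, which proves this proposition simply by citing Corollary~5 of Chapter~21 of Hirschfeld's book (the same source used for Proposition~\ref{p-planes}). Your self-contained argument --- dualizing Proposition~\ref{p-planes} through the null-polarity when $p\ne 3$, and counting directly along the axis, tangents and chords when $p=3$ --- is sound and buys an actual proof where the paper offers none; it also makes visible \emph{why} the two characteristics behave differently (the degeneration of the polarity and the factorization $(x-y)^2(x-y+3\lambda)$ collapsing to $(x-y)^3$). Three points deserve more care than your sketch gives them. First, the $G_q$-equivariance of $\sigma$ is best checked directly (it is the content of \cite[Lemma 21.1.2]{hirschfeld:1985}); ``characterized up to scalar by $\sigma(\C_3)=\bg_3$'' only pins $\sigma$ down modulo the stabilizer of $\bg_3$, which is not immediately a commutation statement. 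Second, your tangency computation shows $\M_2\subseteq\{\text{points on exactly two osculating planes}\}$ but not equality; you need to close this by noting that $\sigma$ identifies the other four point classes \emph{exactly} with $\N_1,\N_3,\N_4,\N_5$, so the complement must map onto $\N_2$, and then match cardinalities ($q+1$ tangents, $q$ off-curve points each, pairwise disjoint) --- this is consistent with Remark~\ref{r-lines}. Third, the transitivity of the chord stabilizers in case (ii) is not automatic: the cyclic group of order $q-1$ acts on the off-curve points of $c(0,\infty)$ by $\lambda\mapsto a^3\lambda$, so transitivity uses $\gcd(3,q-1)=1$, which holds precisely because $q\equiv 0\pmod 3$ (and similarly $\gcd(3,q+1)=1$ for imaginary chords); this is exactly the arithmetic that makes the real-chord points split into two orbits in part (i) when $q\equiv 1\pmod 3$, as recorded in Remark~\ref{r-lines}, so it is worth stating explicitly.
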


\begin{proof}See Corollary 5 of Chapter 21 in \cite{hirschfeld:1985}.
\end{proof}

\begin{rem}\label{r-lines}If $p\ne3$, then $\M_2$ is also the set of points on exactly two osculating planes, and
$\M_3 \cup \M_5$ is the set of points not in $\C_3(q)$ on a real (or imaginary) chord,
and   $\M_4$ is the set of points not in $\C_3(q)$ on an imaginary (or real) chord if $q \equiv 1 \mod 3$ (or $q \equiv -1 \mod 3$ respectively)
by the corollary of \cite[Lemma 21.1.11]{hirschfeld:1985}.\\
If $p=3$, then $\M_2 \cup \M_3$ is the set of points not in $\C_3(q)$ on a tangent.\\
Hence for all $q$ we have that every point not in $\C_3(q)$ is on a unique line that is a tangent, a real chord or an imaginary chord.
\end{rem}

\begin{rem}We will give a partition of the lines in $\proj^3$ in Section \ref{s-class-lines}.
\end{rem}

\section{Algebraic curves}

Let $\fq$ be the field with $q$ elements, where $q=p^h$ for some prime $p$.
The {\em projective space} of dimension $r$ is denoted by $\proj^r$. Let $\ff $ be a field. An $\ff$-{\em rational point} of $\proj^r$ is an
equivalence class of $\ff^{r+1}\setminus \{ 0 \}$ under the equivalence relation $\bx \equiv \by $ if and only if $\bx = \lambda \by $
for some nonzero $\lambda \in \ff$. The equivalence class of $\bx =(x_0, x_1,\ldots ,x_r)$ is denoted by $(x_0: x_1:\ldots :x_r)$.
Dually a {\em hyperplane} in $\proj ^r$ given by the equation $a_0x_1+a_1x_1+\cdots +a_rx_r=0$ is denoted by $[a_0: a_1:\ldots :a_r]$.
Let $\X $ be a subvariety of $\proj ^r$.
Then the set of $\ff$-rational points  of $\X$ is denoted by $\X(\ff )$ and by $\X(q)$ in case $\ff=\fq$.\\
For the theory of algebraic curves we will refer to the textbooks \cite{hartshorne:1977,hirschfeld:2008,stichtenoth:1993}.
By an (algebraic) curve we mean an algebraic variety over a field $\ff $ of dimension one, so it is absolutely irreducible.
Most of the time we assume that the curve is nonsingular, unless stated otherwise. The {\em genus} of the curve $\X$ is denoted by $g(\X)$.

\subsection{Divisors on a curve}\label{s-divisors}
Let ${\cal X}$ be a curve over $\fq$.
A {\em place} of a curve $\X$ over the finite field $\fq$ is an orbit under Frobenius of the points of $\X(\fqm )$ of some finite extension $\fqm$ of  $\fq$.
The {\em degree}  of the place $P$ is  the number of points in its orbit and is denoted by $\deg(P)$.
Alternatively a place can be defined as a {\em discrete valuation} of the {\em function field} $\fq(\X)$.\\
The number of points of the projective line that are defined over $\fq $ is equal to $q+1$,
and a place of degree $d$ corresponds one-to-one to a monic irreducible polynomial in $\fq [X]$ of degree $d$.
In particular $\frac12 (q^2-q)$ is the number of places of degree $2$.\\
A {\em divisor} on a curve $\X$ is a formal sum of places $P$ with integer coefficients such that only finitely many are nonzero.
The degree of the divisor $D=\sum_P m_PP$ is defined by $\deg(D) =\sum_P m_P\deg(P)$.
A divisor is called {\em effective} in case all its coefficients are nonnegative.
A divisor $\sum_P m_P$ is called {\em simple } if $m_P=0$ or $m_P=1$ for all  places $P$.

\subsection{Ramified covers}\label{ramify}

For the following we refer to \cite{hartshorne:1977,hirschfeld:2008,stichtenoth:1993}.

\begin{defi}\label{d-ramify}
Consider a  morphism $\varphi : \X \rightarrow \Y$ of the nonsingular absolutely irreducible curves $\X$ and $\Y$ over the field $\ff $.
Then $\ff (\X$), the function field of $\X$ is a finite field extension  of $\ff (\Y$), the function field of $\Y$, via $\varphi$.
The degree of this extension is also called the {\em degree} of $\varphi $ and will be denoted by $\deg (\varphi)$.\\
Let $x$ be a {\em local parameter} at the place $P$ of $\X$. Then $x$ is a generator of $\M_P$, the unique {\em maximal ideal} of $\co_P$,
the {\em local ring} of $\X$ at $P$. Let $y$ be a local parameter at the place $Q=\varphi(P)$ of $\Y$.
Then the local ring of $\Y$ at $Q$  is via $\varphi $ a subring of $\co_P$. In this way we consider $y$ as an element of $\co_P$
and $y= cx^e$ where $c$ is an invertible element of $\co_P$ and $e$ is a non-negative integer that is called the
{\em ramification index} of $\varphi $ at the place  $P$ and is denoted by $e_P(\varphi)$ or  by $e_P$.
The morphism $\varphi$  is said to {\em ramify} at $P$ and $P$ a ramification place of $\varphi $ if  $e_P>1$.
\end{defi}

\begin{prop}\label{p-deg-ram-ind}
If $P$ is a place of $\X$ and $\varphi(P)=Q$, then $Q$ is a place of $\Y$ and $\deg(Q)$ divides $\deg(P)$ and
$\deg(P)/\deg(Q)$ is called the {\em relative degree} and denoted by $\deg(P,Q)$.
If $Q$ is a place of $\Y$, then
$$
\deg (\varphi)  = \sum _{\varphi(P)=Q} e_P\deg(P,Q).
$$
In particular, the fibre $\varphi^{-1}(Q)$ consist of at most $\deg (\varphi)$ places.
\end{prop}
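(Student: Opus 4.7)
The plan is to reduce the statement to the fundamental identity for finite extensions of function fields in one variable, as developed for instance in \cite[Thm.~3.1.11]{stichtenoth:1993}. First I would translate the geometric set-up into the language of function fields: set $K=\fq(\Y)$ and $L=\fq(\X)$, so that the inclusion $\varphi^{*}:K\hookrightarrow L$ makes $L/K$ a finite extension of degree $\deg(\varphi)$. In this dictionary a place $P$ of $\X$ is a discrete valuation $v_P$ of $L$, a place $Q$ of $\Y$ is a discrete valuation $v_Q$ of $K$, and the relation $\varphi(P)=Q$ is equivalent to saying that the restriction of $v_P$ to $K$ is equivalent to $v_Q$, i.e.\ that $P$ lies above $Q$.

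Next I would verify the divisibility of degrees. The inclusion of local rings $\co_Q\subseteq\co_P$ sends $\M_Q$ into $\M_P$ and therefore induces an embedding of residue fields $k(Q)\hookrightarrow k(P)$. Since $k(Q)$ and $k(P)$ are the finite fields of degree $\deg(Q)$ and $\deg(P)$ over $\fq$ respectively, divisibility of finite field extensions gives $\deg(Q)\mid\deg(P)$, and the residue degree $f(P\mid Q):=[k(P):k(Q)]$ equals $\deg(P)/\deg(Q)=\deg(P,Q)$.

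Then I would invoke the fundamental identity for the finite extension $L/K$ at the place $Q$:
$$
[L:K]\;=\;\sum_{P\mid Q} e(P\mid Q)\,f(P\mid Q).
$$
Substituting $[L:K]=\deg(\varphi)$, $e(P\mid Q)=e_P$ from Definition \ref{d-ramify}, and $f(P\mid Q)=\deg(P,Q)$ from the previous paragraph yields the claimed formula. The final assertion is immediate, since each summand is a positive integer at least $1$, and hence the number of places $P$ in the fibre $\varphi^{-1}(Q)$ cannot exceed $\deg(\varphi)$.

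The only non-formal step, and thus the main obstacle, is the fundamental identity itself. This is a classical but non-trivial theorem; the standard proof completes at $Q$, exploits the isomorphism $L\otimes_K\widehat{K}_Q\cong\prod_{P\mid Q}\widehat{L}_P$, and then computes the $\widehat{K}_Q$-dimension of this module in two ways, globally giving $[L:K]$ and locally giving $\sum_P e_P f(P\mid Q)$. Since this is established in each of the references \cite{hartshorne:1977,hirschfeld:2008,stichtenoth:1993} already cited, I would simply quote it at the required point rather than reproduce its proof.
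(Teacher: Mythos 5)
Your proposal is correct and follows essentially the same route as the paper, whose entire proof is a citation of the fundamental identity in \cite{stichtenoth:1993} (Theorem III.1.11, the same result you quote); your additional remarks on the residue-field embedding and the local-ring dictionary merely make explicit what that reference already contains. No gap.
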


\begin{proof}See \cite[Theorem III.1.11]{stichtenoth:1993}.
\end{proof}

\begin{rem}\label{r-deg-ram-ind}
If $\deg (\varphi)\leq 3$, then $\varphi$ is injective on the set of ramification places by Proposition \ref{p-deg-ram-ind}.
\end{rem}

\begin{defi}
Let $\varphi : \X \rightarrow \Y$ be a {\em separable} morphism between two curves.
The ramification at $P$ is called {\em tame} if the characteristic does not divide $e_P$, otherwise it is called {\em wild}.
The morphism ramifies at finitely many places.
The {\em ramification divisor} of $\varphi $ is defined by
$$
R_{\varphi } = \sum _{P} (e_P-1)P.
$$
\end{defi}

\begin{defi}\label{d-different}
Consider a  morphism $\varphi : \X \rightarrow \Y$.
Let $x$ be a {\em local parameter} at the place $P$ of $\X$. Let $y$ be a local parameter at the place $Q=\varphi(P)$ of $\Y$.
Then $y= cx^e$ where $c$ is an invertible element of $\co_P$ and $e$ is  the ramification index of $\varphi $ at the place  $P$.
Let $y'$ be the derivative of $y$ with respect to the derivation of $x$.
The {\em different exponent} of $\varphi $ at the place $P$ is the smallest $d$ such that $ y' \in \M_ P^d$  and is denoted by $d_P(\varphi )$ or by $d_P$.
The {\em different divisor} of $\varphi $ is defined by
$$
D_{\varphi } = \sum _{P} d_P\deg(P).
$$
\end{defi}

\begin{rem}\label{r-different}
By the Leibniz rule we have
$$
y'=c'x^e +ecx^{e-1}.
$$
Hence $d_P\geq e_P-1$, and $d_P = e_P-1$ if and only if the ramification at $P$ is tame, that is if characteristic of $\ff $ does not divide $e_P$.
If the ramification is wild then $d_P+1$ is not divisible by the characteristic.
\end{rem}

\begin{thm}[Riemann-Hurwitz genus formula]\label{thm-RH} Let $\varphi : \X \rightarrow \Y$ be a separable morphism between curves that is not constant.
Then
$$
2g(\X)-2 = \deg (\varphi) (2g(\Y)-2) + \deg (D_{\varphi }).
$$
\end{thm}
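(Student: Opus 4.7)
The plan is to compare the canonical divisors of $\X$ and $\Y$ by pulling back a non-zero rational differential along $\varphi$. Since $\varphi$ is separable and non-constant, $\ff(\X)$ is a finite separable extension of $\ff(\Y)$, so for any non-zero meromorphic differential $\omega$ on $\Y$ the pullback $\varphi^{*}\omega$ is a non-zero meromorphic differential on $\X$. The degree of the divisor of any such differential on a curve of genus $g$ equals $2g-2$, so it suffices to prove the local-to-global identity
\[
\deg((\varphi^{*}\omega))=\deg(\varphi)\deg((\omega))+\deg(D_{\varphi}).
\]

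To establish this I would compute the order $v_P(\varphi^{*}\omega)$ at each place $P$ of $\X$. Setting $Q=\varphi(P)$, pick local parameters $x$ at $P$ and $y$ at $Q$. By Definition \ref{d-ramify} one has $y=cx^{e_P}$ with $c$ a unit at $P$, and writing $\omega=f\,dy$ locally, the pullback is $\varphi^{*}\omega=f\cdot y'\,dx$, where $y'$ is the derivative of $y$ with respect to $x$. By Definition \ref{d-different} and the Leibniz computation of Remark \ref{r-different}, $v_P(y')=d_P$ irrespective of whether the ramification is tame or wild, while $v_P(f)=e_P\,v_Q(\omega)$. Hence
\[
v_P(\varphi^{*}\omega)=e_P\,v_Q(\omega)+d_P.
\]

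Summing with weights $\deg(P)=\deg(P,Q)\deg(Q)$ and applying Proposition \ref{p-deg-ram-ind}, the contribution of the first term collapses fibre by fibre as
\[
\sum_{Q}v_Q(\omega)\deg(Q)\sum_{\varphi(P)=Q}e_P\deg(P,Q)=\deg(\varphi)\deg((\omega)),
\]
while the contribution of the $d_P$ terms is exactly $\deg(D_{\varphi})$. Substituting $\deg((\omega))=2g(\Y)-2$ and $\deg((\varphi^{*}\omega))=2g(\X)-2$ yields the stated formula.

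The main obstacle, and the step I expect to occupy most of the technical work, is justifying the local formula $v_P(\varphi^{*}\omega)=e_P v_Q(\omega)+d_P$ in the wild case, where $d_P$ may exceed $e_P-1$. One must verify that the integer $d_P$ defined by $y'\in \M_P^{d_P}\setminus \M_P^{d_P+1}$ is independent of the choice of local parameters $x$ at $P$ and $y$ at $Q$, and coincides with the different exponent in the sense of Dedekind; once this is in hand, the global identity is a straightforward bookkeeping across the fibres of $\varphi$.
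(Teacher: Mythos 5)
Your proposal is correct and is the standard differential--pullback proof: the key local identity $v_P(\varphi^{*}\omega)=e_P v_Q(\omega)+d_P$ together with Proposition \ref{p-deg-ram-ind} is exactly how the result is established in the references the paper cites (Hartshorne IV.2.4, Stichtenoth III.4.12), the paper itself offering no proof beyond those citations. The one point you flag as the main obstacle, independence of $d_P$ from the choice of local parameters, is genuine but resolves quickly by the chain rule, since the derivative of one local parameter with respect to another is a unit even in the wild case; no appeal to the Dedekind different is needed because the paper defines $d_P$ directly as the order of $y'$.
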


\begin{proof}See \cite[Corollary 2.4]{hartshorne:1977}, \cite[Therorem 7.27]{hirschfeld:2008} and \cite[Theorem III.4.12]{stichtenoth:1993}.
\end{proof}

If the degree  of the morphism $\varphi$ is $1$, then the morphism is an isomorphism and there is no ramification.

\section{Rational functions on the projective line}

\subsection{Equivalence of rational functions}

\begin{defi}\label{d-rational}A {\em rational function} $\varphi: \proj^1  \dashrightarrow \proj^1$  over $\fq $ of degree $d$.
is given by $\varphi (x:y) = (f(x,y): g(x,y))$ where $f(x,y)$ and $g(x,y)$ are homogeneous polynomials of degree $d$.
Let $h(x,y)=\gcd ((f(x,y), g(x,y))$. The divisor defined by $h(,x,y)=0$ is called the {\em base divisor} of $\varphi$ and is denoted by $B_{\varphi}$.
\end{defi}

\begin{defi}Let $\varphi , \psi : \proj^1 \rightarrow \proj^1$ be two rational functions defined over $\ff $.
They are called {\em right} (R) {\em equivalent} if there is an automorphism $\alpha \in PGL(2,\ff)$
such that $\psi =  \varphi  \circ \alpha $,
and {\em left} (L) {\em equivalent} if there is an automorphism $\beta \in PGL(2,\ff)$
such that $\psi =   \beta \circ \varphi $.\\
Furthermore  $\varphi $ and $\psi$ are called  {\em right-left} (RL) {\em equivalent} if there automorphisms $\alpha \in PGL(2,\ff)$  and $\beta \in PGL(2,\ff)$
such that $\psi = \beta \circ \varphi \circ  \alpha$.
If moreover $\beta =\alpha^{-1}$, then $\varphi , \psi $ are called {\em conjugate}.
\end{defi}

\begin{rem}Let $\varphi: \proj^1  \dashrightarrow \proj^1$  be given by $\varphi (x:y) = (f(x,y): g(x,y))$.\\
(1) Let $h(x,y)=\gcd ((f(x,y), g(x,y))$. Then $\varphi$ is a well-defined map outside the zero zet of $h(,x,y)$.
Let $\tilde{f}(x,y)=f(x,y)/h(x,y)$, $\tilde{g}(x,y)=g(x,y)/h(x,y)$ and $\tilde{\varphi} (x:y) = (\tilde{f}(x,y): \tilde{g}(x,y))$.
Then $\tilde{\varphi} $ is a well-defined function on $\proj^1$,
and $\varphi$ and $\tilde{\varphi} $ define the same function outside the zero zet of $h(,x,y)$.
We call $\tilde{\varphi}$ is the {\em associated morphism} of $\varphi $.\\
We will a distinction between the notions of a {\em rational function} $ \proj^1  \dashrightarrow \proj^1$ and a {\em morphism} $\proj^1  \rightarrow \proj^1$.\\
(2) Let $f(x,y)= \sum _{j=0}^d f_jx^{d-j}y^j$ and $g(x,y)= \sum _{j=0}^d g_jx^{d-j}y^j$.
The $ 2\times (d+1)$ matrix  with first row $(f_0,f_1,\dots,f_d)$ and second row $(g_0,g_1,\dots,g_d)$  has rank $s\leq 2$,
then $s\leq d$ and the image of $\varphi $ is contained in a subspace of $\proj^1$ of dimension $s-1$,
that is either $\proj^1$ or a point when $\varphi $ is constant.
Therefore we assume from now on that the image of $\varphi $ is not constant. Hence $d \geq 2$.\\
(3) Under the L-equivalence of the action of $ PGL(2,\ff)$, the projectivities  of $\proj ^1$, we may assume that the
$2\times (d+1)$ matrix is in row reduced echelon form.
(4) The corresponding rational function on the affine line is also denoted by $\varphi$ and is given by $\varphi (x) = f(x) /g(x)$,
where $f(x)$ and $g(x)$ are univariate polynomials  $d= \max \{ \deg(f(x)), \deg(g(x)) \}$.
By (3) we may assume that $d= \deg(f(x))>  \deg(g(x))$, and $f(x)$ and $g(x)$ are monic, and $f_{0e}=0$ where
$e= \deg(g(x))$.
\end{rem}

\begin{rem}\label{r-rational-2}Let $\varphi: \proj^1 \rightarrow \proj^1$ be a separable morphism.
Then $\deg (D_{\varphi }) =2d-2 $ by the Riemann-Hurwitz genus formula \ref{thm-RH}.\\
(1) If $d= \deg(f(x)) > \deg(g(x))$, then $\varphi(P_{\infty})=P_{\infty}$ and the ramification exponent of $P_{\infty}=(1:0)$ is equal to $d-\deg(g(x))$.\\
(2) Let $P=(x_0:1)$ with $x_0 $ in some extension of $\fq$ and $\varphi (x_0)=0$.
Then $\varphi (x) = (x-x_0)^{e_P} \psi(x)$ for some rational function $\psi(x)$ such that $\psi(x_0)\not=0$.
So $\varphi' (x) = e_P(x-x_0)^{e_P-1} \psi(x)+(x-x_0)^{e_P} \psi'(x)$.
Hence $\varphi$ ramifies at $P$, that is $e_P>1$ if and only if $\varphi' (x_0)=0$.
\end{rem}

\subsection{Rational functions versus codimension two subspaces}

\begin{prop}\label{p-rat-fct-codim2}Let $\ff $ be a field with algebraic closure $\ffa$.
Then there is a one-to-one correspondence between $L$-equivalence classes of non-constant rational functions on $\proj ^1$ over  $\ff$ of degree $d$ and
codimension $2$ subspaces of $\proj^d(\ff)$.
Furthermore, the rational function is a morphism if and only if the codimension subspace does not intersect $\cc_d(\ffa)$.
\end{prop}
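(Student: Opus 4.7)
The plan is to identify nonzero degree-$d$ binary forms on $\proj^1$ with hyperplanes of $\proj^d$ via the tautological pairing, and then to read off both claims from this identification.

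\textbf{Setup.} Let $W$ be the $(d+1)$-dimensional $\ff$-vector space of homogeneous polynomials of degree $d$ in $x,y$, with monomial basis $x^d, x^{d-1}y, \ldots, y^d$. Think of $\proj^d$ with coordinates $(X_0:\ldots:X_d)$ as the ambient space of $\cc_d$, where $\cc_d$ is parametrized by $(x:y)\mapsto(x^d:x^{d-1}y:\ldots:y^d)$. Then a nonzero binary form $f=\sum a_i x^{d-i}y^i$ corresponds to the hyperplane $H_f=[a_0:\ldots:a_d]$ of $\proj^d$, and $H_f\cap\cc_d$ is the scheme of zeros of $f$ on $\proj^1$. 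This gives a bijection between $\proj(W)(\ff)$ and the set of $\ff$-rational hyperplanes of $\proj^d$.

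\textbf{The bijection.} I would interpret a non-constant rational function $\varphi=(f:g)$ of degree $d$ as a pair of linearly independent elements $f,g\in W$ (non-constancy is exactly their linear independence), well defined up to a common scalar. An $L$-equivalence $\psi=\beta\circ\varphi$ with $\beta=\begin{pmatrix}a & b\\ c & d\end{pmatrix}$ sends $(f,g)$ to $(af+bg,cf+dg)$, which is precisely a change of basis inside the $2$-dimensional subspace $\langle f,g\rangle\subseteq W$, and conversely every change of basis arises this way. Hence $L$-equivalence classes of non-constant rational functions of degree $d$ over $\ff$ correspond bijectively to $2$-dimensional $\ff$-rational subspaces of $W$. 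Under the hyperplane identification, these are exactly the $\ff$-rational codimension $2$ subspaces $H_f\cap H_g$ of $\proj^d$ (i.e.\ the intersections of the pencils of hyperplanes in the $L$-equivalence class).

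\textbf{Morphism criterion.} By Definition~\ref{d-rational}, $\varphi$ is a morphism precisely when its base divisor $B_\varphi$ is zero, i.e.\ when $\gcd(f,g)=1$, i.e.\ when $f$ and $g$ share no common root in $\proj^1(\ffa)$. A common root $(x_0:y_0)$ yields the point $(x_0^d:x_0^{d-1}y_0:\ldots:y_0^d)\in\cc_d(\ffa)$ lying on both $H_f$ and $H_g$, hence on $H_f\cap H_g$; conversely every point of $\cc_d(\ffa)\cap(H_f\cap H_g)$ arises this way. Therefore $\varphi$ is a morphism if and only if the associated codimension $2$ subspace does not intersect $\cc_d(\ffa)$. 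No step is truly delicate; the only care needed is to verify $\ff$-rationality throughout, which is automatic since $\beta\in PGL(2,\ff)$ and the forms $f,g$ have $\ff$-coefficients.
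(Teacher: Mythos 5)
Your proposal is correct and follows essentially the same route as the paper: you identify the coefficient vectors of $f$ and $g$ with the two hyperplanes (equivalently, the two linear equations) cutting out the codimension~$2$ subspace, observe that $L$-equivalence is exactly a change of basis of that pencil, and translate a common zero of $f,g$ into a point of $\cc_d(\ffa)$ on the subspace. Your phrasing via the $(d+1)$-dimensional space $W$ of binary forms is just a cleaner packaging of the paper's explicit equations $\sum_j f_jX_j=0$, $\sum_j g_jX_j=0$; no substantive difference.
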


\begin{proof}The proof for morphisms and $\ff =\complex $ is given in \cite[p. 106]{eremenko:2002} and generalizes for arbitrary fields as follows.\\
Let $\varphi (x:y) = (f(x,y): g(x,y))$ be a non-constant rational function on $\proj ^1$ over  $\ff$ of degree $d$ with
$f(x,y)= \sum _{j=0}^d f_jx^{d-j}y^j$ and $g(x,y)= \sum _{j=0}^d g_jx^{d-j}y^j$  with $f_j, g_j \in \ff $ for all $j$.
Let $\cl_{\varphi }$ be the subspace of  $\proj^d(\ff )$ defined by the homogeneous linear equations $\sum_{j=0}^d f_jX_j=0$ and $\sum_{j=0}^d g_jX_j=0$.
The rational map $\varphi$ is not constant. So $f(x,y)$ and $g(x,y)$ are not a constant multiple of each other.
Hence $\cl_{\varphi }$  is a codimension $2$ subspace of $\proj^d(\ff)$.\\
Conversely, if $f(x,y)$ and $g(x,y)$ have a non-constant factor  $g(x,y)$ in common, then $\cl_{\varphi }$ intersects $\cc_d(\ffa)$ at the zero set of $g(x,y)$.\\
Conversely, let $\cl$ be a codimension $2$ subspace of $\proj^d(\ff)$  by the equations $\sum_{j=0}^d f_jX_j=0$ and $\sum_{j=0}^d g_jX_j=0$.
Define $f(x,y)=\sum_{j=0}^d f_jx^jy^{d-j}$ and $g(x,y)=\sum_{j=0}^d g_jx^jy^{d-j}$.
Then $f(x,y)$ and $g(x,y)$are not a constant multiple of each other, since $\cl$ has codimension $2$.
So $\varphi_{\cl }$ defined by $\varphi_{\cl } (x:y) = (f(x,y):g(x,y))$ is a non-constant rational functions on $\proj ^1$ over  $\ff$ of degree $d$.\\
If $\cl$  intersects $\cc_d(\ffa)$ at $P(x_0:y_0)$, then $f(x_0,y_0)=0$ and $g(x_0,y_0)=0$.
Hence $f(x,y)=(x_0y-y_0x)c(x,y)$ and $g(x,y)=(x_0y-y_0x)d(x,y)$ for some homogeneous polynomials $c(x,y)$  and $d(x,y)$ of degree $d-1$.
Therefore $f(x,y)$ and $g(x,y)$ have a factor in common.\\
If $\psi$ is $L$-equivalent with $\varphi $, then there are $a,b,c,d \in \ff $ such that $ad-bc\not=0$ and
$\psi (x,y) =(a f(x,y)+b g(x,y))/(cf(x,y)+d g(x,y))$. Hence $\cl_{\psi }=\cl_{\varphi }$.\\
Conversely, another pair of homogeneous linear equations defining $\cl$ will give $\psi$, a rational function on $\proj ^1$ over  $\ff$ of degree $d$
that is $L$-equivalent with $\varphi$.
\end{proof}

\begin{rem}The number of intersection points of $\cl_{\varphi }$ with $\cc_d(\ffa)$, counted with multiplicities, that is
the degree of the base divisor of $\varphi $  is equal to
$\deg (\varphi)-\deg (\tilde{\varphi})$, where $\tilde{\varphi}$ is the associated morphism of $\varphi $.
\end{rem}

\begin{rem}\label{r-ramexp-intmult}
Let $\varphi (x) = f(x)/ g(x))$ be a non-constant rational function  of degree $d$ with
$f(x)= \sum _{j=0}^d f_jx^{d-j}$ and $g(x)= \sum _{j=0}^d g_jx^{d-j}$ and $f_j, g_j \in \ff $ for all $j$.
Then  $x\in \varphi^{-1} (u) $ if and only if $P(x)$ is in the hypersurface $H_{\varphi, u}$ with equation $\sum_{j=0}^d (f_j-ug_j)x_j=0$.
More precisely the ramification exponent of $e_x(\varphi)$ is equal to the intersection multiplicity of that hypersurface with $\C_r$.\\
In particular places in the support of $R_{\varphi }$ correspond one-to-one to those places where  $H_{\varphi, u}$ is tangent to $\C_r $ for some $u$.
The hypersurfaces $H_{\varphi, u}$ contain $\cl_{\varphi}$ for all $u$ and they form the so called {\em pencil} of hyperplanes of $\cl_{\varphi}$.
\end{rem}

\begin{rem}\label{r-catalan}Every morphism $\varphi: \proj^1 \rightarrow \proj^1$ of degree $d$ has a different divisor $D_{\varphi }$ that is an effective divisor of degree $2d-2$.
Let $C_d=\frac{1}{d}{ 2d-2 \choose d-1 }$ be the $d$-th Catalan number.
If $\ff $ is an algebraically closed field and $D$ a generic effective divisor of degree $2d-2$, then there are $C_d$ morphisms on $\proj ^1$ of degree $d$
with the given $D$ as different divisor \cite{goldberg:1991}. In particular, there are $2$ morphisms on $\proj ^1$ of degree $3$ with the given effective divisor $D$ of degree $4$ as different divisor.
\end{rem}

\subsection{A partition of morphisms on $\proj^1$ of degree $2$}

\begin{prop}\label{p-deg=2}Let $\varphi : \proj^1 \rightarrow \proj^1$ be a morphism of degree $2$ over $\fq $.\\
Then one of the following cases hold:\\
$(1)$ \ \ \ $q$ is odd and $\varphi$ is separable and tame and $D_{\varphi }=R_{\varphi }$ and\\
$(1.a)$ there are two $\fq$-rational points $P_1$ and $P_2$ such that $R_{\varphi }=P_1+P_2$,\\
$(1.b)$ there is a place $Q$  of degree $2$ such that $R_{\varphi }=Q$,\\
$(2)$ \ \ \ $q$ is even and\\
$(2.a)$ $\varphi$ is purely inseparable,\\
$(2.b)$ $\varphi$ is separable and $R_{\varphi }=P$ and $D_{\varphi }=2P$ for a $\fq$-rational point $P$.
\end{prop}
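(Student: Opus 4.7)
The plan is to reduce everything to the Riemann--Hurwitz formula (Theorem \ref{thm-RH}) combined with the description of the different exponent in Remark \ref{r-different}. Whenever $\varphi$ is separable, substituting $\deg(\varphi) = 2$ and $g(\proj^1) = 0$ yields $-2 = 2(-2) + \deg(D_{\varphi})$, so $\deg(D_{\varphi}) = 2$. Since $\deg(\varphi) = 2$, at any ramified place $P$ we necessarily have $e_P = 2$, and by Remark \ref{r-deg-ram-ind} the morphism is injective on the ramification locus.

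For part $(1)$, with $q$ odd, the ramification is automatically tame since $p \nmid 2$, so $d_P = e_P - 1 = 1$ at each ramified place, whence $D_{\varphi} = R_{\varphi}$. The identity $\deg(D_{\varphi}) = \sum_P d_P \deg(P) = 2$, with the sum ranging over the ramified places and each $d_P$ equal to $1$, reduces to partitioning $2$ as a sum of degrees of distinct places. The two partitions $2 = 1 + 1$ and $2 = 2$ give, respectively, subcase $(1.a)$ (two $\fq$-rational places $P_1,P_2$) and subcase $(1.b)$ (a single place $Q$ of degree $2$).

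For part $(2)$ I would first dispose of the inseparable alternative: if $\varphi$ is not separable, then the inseparable degree of the function-field extension $\fq(\proj^1)/\fq(\proj^1)$ induced by $\varphi$ is a power of $p = 2$ dividing $\deg(\varphi) = 2$, so it equals $2$ and $\varphi$ is purely inseparable, yielding subcase $(2.a)$. When $\varphi$ is separable, every ramified place has $e_P = 2 = p$, so the ramification is wild; Remark \ref{r-different} then forces $d_P \geq e_P = 2$ with $d_P + 1$ odd, so $d_P$ is an even integer at least $2$. Under these constraints, the only way the degrees $d_P\deg(P)$ can sum to $2$ is with a single $\fq$-rational place $P$ and $d_P = 2$, giving $R_{\varphi} = P$ and $D_{\varphi} = 2P$; this is subcase $(2.b)$.

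The only genuinely non-routine step is the last one: one must simultaneously recognize that in characteristic $2$ the equality $\deg(\varphi) = p$ both allows the purely inseparable case to occur and forces any separable $\varphi$ to ramify wildly, so that the combined bound $d_P \geq e_P$ and parity constraint on $d_P$ from Remark \ref{r-different} pin down $R_{\varphi}$ and $D_{\varphi}$ uniquely. Everything else is straightforward bookkeeping with Riemann--Hurwitz and Proposition \ref{p-deg-ram-ind}.
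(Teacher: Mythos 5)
Your proposal is correct and follows essentially the same route as the paper: Riemann--Hurwitz gives $\deg(D_{\varphi})=2$ in the separable case, tameness for odd $q$ yields $D_{\varphi}=R_{\varphi}$ and the two partitions of $2$, and for even $q$ the dichotomy purely inseparable versus wildly ramified with $2=e_P\leq d_P\leq 2$ pins down $(2.b)$. The only cosmetic difference is that you additionally invoke the parity constraint on $d_P+1$ from Remark \ref{r-different}, which the paper does not need since the inequality $e_P\leq d_P\leq\deg D_{\varphi}$ already suffices.
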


\begin{proof}If the morphism is not separable, then the characteristic divides the degree of $\varphi $.
Hence the characteristic is $2$ and the map is purely inseparable.
If the morphism is separable, then $deg (D_{\varphi })=2$ by Remark \ref{r-rational-2}.
Furthermore the ramification index is $2$ at every place where $\varphi$ ramifies by Proposition \ref{p-deg-ram-ind}.\\
$(1)$ If the characteristic is odd, then the ramification index is $2$ at the ramification places,
which is not divisible by the characteristic. Hence $D_{\varphi }=R_{\varphi }$ and has degree $2$. So either\\
$(1.a)$ there are two $\fq$-rational points $P_1$ and $P_2$ such that $R_{\varphi }=P_1+P_2$, or\\
$(1.b)$ there is a place $Q$ of degree $2$ such that $R_{\varphi }=Q$.  \\
$(2)$ If the characteristic is even, then either\\
$(2.a)$ $\varphi$ is purely inseparable,\\
$(2.b)$ or $\varphi$ is separable and ramifies at a place $P$. Then $e_P=2$ by Proposition \ref{p-deg-ram-ind} and the ramification is wild and
$2=e_P \leq d_P \leq \deg D_{\varphi }=2$. So $P$ is an $\fq$-rational point and $R_{\varphi }=P$ and $D_{\varphi }=2P$
\end{proof}

\begin{rem}\label{r-normal-deg=2}Without proof we mention that all the cases given in Proposition \ref{p-deg=2}
do appear and are $RL$-equivalent to one of the following normal forms:\\
$(1.a)$ $\varphi (x)=x^2$ and $R_{\varphi }=P(0)+P(\infty)$ with $P(0)=\varphi(P(0))$ and $PP(\infty)=\varphi(P(\infty))$.\\
$(1.b)$ $\varphi (x)=(x^2+d)/x$ where $d$ a chosen non-square in $\fq $ and
$R_{\varphi }=Q$ with $Q$ the place of degree $2$ corresponding to the irreducible polynomial $X^2-d$.\\
$(2.a)$ $\varphi (x)=x^2$ where  $\varphi$ is purely inseparable. \\
$(2.b)$ $\varphi (x)=x^2/(x+1)$ where $\varphi$ is separable and $R_{\varphi }=P(0)$ and $D_{\varphi }=2P(0)$.
\end{rem}

\begin{defi}\label{d-pijk}Let $\varphi : \proj^1 \rightarrow \proj^1$ be a morphism.
Denote by $p_{i,j,k}$ the number of places $Q$ of $\proj^1$ of degree $i$
that have $j$ places of degree $k$ in $\varphi^{-1}(Q)$.
\end{defi}

\begin{rem}\label{r-pijk}
If $p_{i,j,k}$ is not zero, then $i$ divides $k$ and $ j \leq \deg (\varphi)$ Proposition \ref{p-deg-ram-ind}.
\end{rem}

\begin{prop}\label{p-deg=2pijk}Let $\varphi : \proj^1 \rightarrow \proj^1$ be a separable morphism of degree $2$ over $\fq $.
Then corresponding to those given in Proposition \ref{p-deg=2} the following holds:\\
$(1.a)$ $p_{1,1,1}=2$, $p_{1,2,1}=p_{1,1,2}=\frac12(q-1)$.\\
$(1.b)$ $p_{1,1,1}=0$, $p_{1,2,1}=p_{1,1,2}=\frac12(q+1)$.\\
$(2.a)$ $p_{1,1,1}=q+1$, $p_{2,1,2}=\frac12(q^2-q)$.\\
$(2.b)$ $p_{1,1,1}=1$, $p_{1,2,1}=p_{1,1,2}=\frac12q$.
\end{prop}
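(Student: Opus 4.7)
The plan is to avoid any explicit fibre-by-fibre computation and reduce the problem to two simple counting identities on $\proj^1(\fq)$. Observe first that the invariants $p_{i,j,k}$ depend only on how $\varphi$ partitions closed points by degree, hence are unchanged under right-left equivalence; one may therefore pass to the normal forms of Remark \ref{r-normal-deg=2} if needed, although the argument below does not require this.

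For a separable $\varphi$ of degree $2$ and a rational place $Q$ of the target, the degree formula
\[
2=\sum_{\varphi(P)=Q} e_P\deg(P,Q)
\]
of Proposition \ref{p-deg-ram-ind} leaves only three shapes for $\varphi^{-1}(Q)$: (a) one ramified rational place, (b) two distinct unramified rational places, (c) one unramified place of degree~$2$; these are precisely the fibres counted by $p_{1,1,1}$, $p_{1,2,1}$, $p_{1,1,2}$. Counting rational places of the target gives
\[
p_{1,1,1}+p_{1,2,1}+p_{1,1,2}=q+1,
\]
and partitioning the $q+1$ rational points of the source by the type of their image (a rational source always maps to a rational target, and fibres of the three types contain $1$, $2$, $0$ rational source points respectively) gives
\[
p_{1,1,1}+2p_{1,2,1}=q+1.
\]
Subtracting yields the master formula $p_{1,2,1}=p_{1,1,2}=\tfrac{1}{2}(q+1-p_{1,1,1})$, so the whole problem reduces to reading off $p_{1,1,1}$.

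In each separable case, $p_{1,1,1}$ counts the rational places with a ramified rational preimage, and since $\deg\varphi=2$ forces distinct ramification points to map to distinct targets, this is simply the number of rational places in the support of $R_\varphi$. Proposition \ref{p-deg=2} gives this as $2$, $0$, $1$ in cases (1.a), (1.b), (2.b), and substitution into the master formula returns the stated values of $p_{1,2,1}$ and $p_{1,1,2}$. Case (2.a) is handled separately: there $\varphi$ is the Frobenius of $\proj^1$, a degree-preserving bijection on closed points with ramification index $2$ everywhere, so every rational $Q$ has a unique rational preimage with $e_P=2$ (giving $p_{1,1,1}=q+1$) and every degree-$2$ place $Q$ has a unique degree-$2$ preimage with $e_P=2$, so $p_{2,1,2}$ equals the number of monic irreducible quadratics over $\fq$, namely $(q^2-q)/2$. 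The only bookkeeping point requiring care is the source-count identity, where a ramified rational preimage contributes $1$, not its ramification index, because it is a single place of $\proj^1(\fq)$.
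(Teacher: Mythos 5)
Your proof is correct and takes essentially the same approach as the paper: the same two counting identities (one over rational places of the target, one over rational points of the source) combined with reading off $p_{1,1,1}$ as the number of rational places in the support of $R_\varphi$ from Proposition \ref{p-deg=2}. The only difference is presentational --- you package the identities into a master formula and treat the purely inseparable case $(2.a)$ explicitly via the Frobenius, where the paper writes out case $(1.a)$ and states that the other cases are treated similarly.
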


\begin{proof}The cases correspond to those given in Proposition \ref{p-deg=2}.\\
$(1.a)$. We have that $R_{\varphi }=P_1+P_2$. So $p_{1,1,1}=2$.
Every rational point $P$ of $\proj^1$ is mapped to a rational point of $\proj^1$, and  $\varphi^{-1}(Q)$
has at most $2$ rational points for every rational point $Q$ of $\Y$, since $\deg (\varphi )=2$. So $p_{1,1,1}+2p_{1,2,1}=q+1$.
For every rational point $Q$ of $\Y$  we have that $\varphi^{-1}(Q)$
consists either of one ramification point or two rational points or one place of degree $2$.
Hence  $2+p_{1,2,1}+p_{1,1,2}=q+1$. Combining these two equations gives the result.\\
The other cases are treated similarly.
\end{proof}

\section{The double point scheme of a morphism}

Let $\varphi: \proj^1  \dashrightarrow \proj^1$ be a non-constant rational function of degree $d$ with $\varphi (x) = f(x)/g(x)$.
Suppose there exist $x, y \in \ff $ such that $x\not=y$ and $\varphi(x)=\varphi(y)$.
Then $\frac{ \varphi(x) - \varphi(y)}{x-y}=0$. So $(f(x)g(y)-f(y)g(x))/(x-y) =0$.

\begin{defi}Let $\varphi : \proj^1 \dashrightarrow \proj^1$ be a rational function of degree $d$ with $\varphi (x) = f(x)/g(x)$.
The {\em double point polynomial } $\Delta_{\varphi } $ of  $ \varphi $ is defined by
$$
\Delta_{\varphi } (x,y) = \frac{ f(x)g(y)-f(y)g(x)}{x-y}.
$$
\end{defi}

\begin{rem}\label{r-permutation-fct}A {\em permutation rational functions} is a rational morphism $\varphi : \proj^1 \rightarrow \proj^1$ defined over $\fq $
such that the map on the $\fq$-rational points is a permutation. Clearly $\varphi $ is a permutation rational function if and only if
$\E_{\varphi }$ has no points $\fq$-rational points outside the diagonal.
Similarly, a polynomial $f(x) \in \fq [x]$ is called {\em permutation polynomial} if $f$ induces a permutation on $\fq$.
In \cite{hou:2020} the polynomial $F(x,y)=[f(x+y)g(x)-f(x)g(x+y)]/y$ is defined for a rational function $\varphi (x)=f(x)/g(y)$.
Now $\Delta_{\varphi }(x,y)=F(x,y-x)$.
\end{rem}

\begin{rem}Let $h(x)=\gcd(f(x),g(x))$ and $\tilde{f}(x)=f(x)/h(x)$ and $\tilde{g}(x)=g(x)/h(x)$.
Then $\tilde{\varphi}(x)=\tilde{f}(x)/\tilde{g}(x)$ is a morphism, that is $\tilde{f}(x)$ and $\tilde{g}(x))$ are relatively prime.
Furthermore $\Delta_{\varphi } (x,y) =h(x)h(y)\Delta \tilde{\varphi}(x,y)$.
\end{rem}

\begin{rem}The double point polynomial of $\varphi $ is a symmetric bivariate polynomial of bidegree at most $(d-1,d-1)$.
The bihomogenization of the double point polynomial of $\varphi $ is defined by
$$
\Delta_{\varphi } (x_0,x_1,y_0,y_1) = \sum_{0\leq i,j \leq d-1} a_{ij}x_0^{d-1-i}x_1^i y_0^{d-1-j}y_1^j,
$$
where $\Delta  \varphi (x,y) = \sum_{0\leq i,j \leq d-1} a_{ij}x^iy^j$.\\
Then $\Delta_{\varphi } (x_0,x_1,y_0,y_1)$ is a symmetric bivariate, bihomogeneous polynomial of bidegree $(d-1,d-1)$.
\end{rem}

\begin{defi}Let $\E_{\varphi } $ be the subscheme of $\proj^1 \times \proj ^1$ defined by the ideal generated by $\varphi (x_0,x_1,y_0,y_1)$.
It is called the {\em double point scheme } of $\varphi $.
See \cite[Definition V-41]{eisenbud:2000}.
\end{defi}

\begin{lemma}\label{l-quotient-rule}Let $\varphi (x) =f(x)/g(x)$ be a  rational function. Then\\
$(1)$ $\Delta_{\varphi }(x,x) = f'(x)g(x)-f(x)g'(x)$,\\
$(2)$ $(x,x)\in \E_{\varphi }(\ffa)$ if and only if $\varphi $ ramifies at $x$.
\end{lemma}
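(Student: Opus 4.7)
The plan is to prove (1) by differentiating a polynomial identity and then deduce (2) from (1) via the quotient rule combined with Remark \ref{r-rational-2}(2).

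For (1), write the definition of $\Delta_{\varphi}$ as the polynomial identity
\[
(x - y)\,\Delta_{\varphi}(x,y) \;=\; f(x)g(y) - f(y)g(x)
\]
in $\ff[x,y]$; this is legitimate because the right-hand side vanishes on the diagonal and is therefore divisible by $x-y$. Applying $\partial/\partial y$ to both sides and then specializing to $y = x$, the term $(x-y)\partial_y\Delta_{\varphi}(x,y)$ is annihilated, leaving $-\Delta_{\varphi}(x,x) = f(x)g'(x) - f'(x)g(x)$, which rearranges to the formula claimed. The argument is characteristic-free.

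For (2), the quotient rule gives $\varphi'(x) = (f'(x)g(x) - f(x)g'(x))/g(x)^2$, so (1) can be rephrased, away from the poles of $\varphi$, as $\Delta_{\varphi}(x_0,x_0) = \varphi'(x_0)\,g(x_0)^2$. Thus at any $x_0$ with $g(x_0)\neq 0$, membership of $(x_0,x_0)$ in $\E_{\varphi}(\ffa)$ is equivalent to $\varphi'(x_0) = 0$, which by the Taylor-expansion argument in Remark \ref{r-rational-2}(2) (applied to $\varphi - \varphi(x_0)$ rather than to $\varphi$) is equivalent to ramification at $P(x_0)$, both in the tame and the wild case. At a pole $x_0$ one may assume $f,g$ coprime (working with the associated morphism $\tilde\varphi$), so $g(x_0)=0$ and $f(x_0)\neq 0$; then (1) yields $\Delta_{\varphi}(x_0,x_0) = -f(x_0)g'(x_0)$, which vanishes iff $g'(x_0)=0$, iff $x_0$ is a multiple root of $g$, iff the pole of $\varphi$ has order at least $2$, iff $\varphi$ ramifies at $P(x_0)$. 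The place $P(\infty)$ is covered by the usual chart change $x\mapsto 1/x$, which converts $\varphi$ into another rational function of the same degree to which the preceding analysis applies.

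The main (mild) obstacle is ensuring the equivalence ``$\varphi'(x_0)=0 \Leftrightarrow \varphi$ ramifies at $x_0$'' uniformly across tame and wild characteristic; however, this is already embedded in the derivation of Remark \ref{r-rational-2}(2), where in the wild case the factor $e_P$ in $\varphi'(x) = e_P(x-x_0)^{e_P-1}\psi(x) + (x-x_0)^{e_P}\psi'(x)$ vanishes, still forcing $\varphi'(x_0)=0$ whenever $e_P>1$. No extra input beyond (1), the quotient rule, and Remark \ref{r-rational-2}(2) is needed.
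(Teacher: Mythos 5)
Your proposal is correct and follows essentially the same route as the paper: part (1) is the ``as in Calculus'' computation (your differentiation of the identity $(x-y)\Delta_{\varphi}(x,y)=f(x)g(y)-f(y)g(x)$ is a clean characteristic-free way to carry it out), and part (2) identifies $\Delta_{\varphi}(x,x)$ with the numerator of $\varphi'(x)$ and invokes the equivalence of $\varphi'(x_0)=0$ with ramification from Remark \ref{r-rational-2}. You are in fact more careful than the paper's two-line proof, which silently passes over the poles of $\varphi$, the place at infinity, and the wild case; your treatment of these is correct and closes those small gaps.
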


\begin{proof}$(1)$ Proved similarly as in Calculus.\\
$(2)$ $\Delta_{\varphi }(x,x) = f'(x)g(x)-f(x)g'(x)$ is the numerator of the derivative $\varphi'(x)$.\\
Therefor $(x,x)\in \E_{\varphi }(\ffa)$ if and only if $\Delta_{\varphi }(x,x) = 0$ if and only if $\varphi'(x)=0$ if and only if $\varphi $ ramifies at $x$.
\end{proof}

\begin{defi}
The ramification at $P$ is called {\em simple} if $e_P=2$.
The morphism $\varphi $ is called simple if all its ramification points are simple and if $\varphi $ ramifies at distinct places $P_1$ and $P_2$,
then $\varphi (P_1)$ and $\varphi (P_2)$ are distinct.
\end{defi}

\begin{defi}Let $\pi_1 : \E_{\varphi } \rightarrow \proj^1$ be the projection on the first factor and
$\pi_2 : \E_{\varphi } \rightarrow \proj^1$ the projection on the second factor.
\end{defi}

\begin{prop}\label{p-simple}Let $\varphi : \proj^1 \rightarrow \proj^1$ be a separable simple morphism.
Then $\E_{\varphi } $ is reduced and nonsingular.
\end{prop}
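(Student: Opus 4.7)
The plan is to verify regularity of $\E_{\varphi}$ at every closed point; reducedness then comes for free, since a Noetherian scheme regular at all closed points has no nilpotents in its local rings. I would cover $\proj^1\times\proj^1$ by the four standard affine patches, use the symmetry $\Delta_{\varphi}(x,y)=\Delta_{\varphi}(y,x)$ to collapse them pairwise, and if needed apply an L-equivalence to move a point at infinity into the finite chart (which does not affect $\E_{\varphi}$ up to isomorphism). By Lemma \ref{l-quotient-rule} the closed points of $\E_{\varphi}$ fall into two families: off-diagonal pairs $(a,b)$ with $\varphi(a)=\varphi(b)$, and diagonal points $(a,a)$ where $\varphi$ ramifies.

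In the off-diagonal case, assume $a\neq b$, $\varphi(a)=\varphi(b)=c$, and first that $g(a),g(b)\neq 0$. Starting from the formula for $\Delta_{\varphi}$ and using $f(a)g(b)-f(b)g(a)=0$, a short computation gives
\[
\partial_y \Delta_{\varphi}(a,b)=\frac{-g(a)g(b)\,\varphi'(b)}{a-b},
\]
and by symmetry $\partial_x \Delta_{\varphi}(a,b)$ is a nonzero multiple of $\varphi'(a)$. If both partial derivatives vanished, then $a$ and $b$ would be two distinct ramification points with the same image $c$, contradicting the simplicity of $\varphi$. The pole case $g(a)=g(b)=0$ is handled by passing to the chart in which the roles of $f$ and $g$ are swapped, where the argument is formally identical.

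For the diagonal case at $(a,a)$, I would expand $h(x,y):=f(x)g(y)-f(y)g(x)$ around $(a,a)$ using Hasse--Schmidt (divided power) derivatives $f^{[k]},g^{[k]}$ so that the argument is characteristic-uniform. Antisymmetry $h(x,y)=-h(y,x)$ forces $h^{[i,i]}(a,a)=0$ and each surviving monomial in the pairing of $(i,j)$ with $(j,i)$ is divisible by $(x-y)$. Dividing by $(x-y)$ yields
\[
\Delta_{\varphi}(x,y)=A+B(x+y-2a)+\text{(higher order)},
\]
with $A=f'(a)g(a)-f(a)g'(a)$ and $B=f^{[2]}(a)g(a)-f(a)g^{[2]}(a)$. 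Setting $F=f-\varphi(a)g$ one checks $F^{[1]}(a)=A/g(a)$ and $F^{[2]}(a)=B/g(a)$, and since $\mathrm{ord}_a F=e_a$, simple ramification $e_a=2$ is equivalent to $A=0$ and $B\neq 0$. So the linear part of $\Delta_{\varphi}$ at $(a,a)$ is the nonzero form $B(x+y-2a)$, proving regularity there with tangent transverse to the diagonal.

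The main obstacle is characteristic two, where simple ramification is wild and the naive Taylor expansion would illegitimately divide by two. Switching to Hasse--Schmidt derivatives is the key move: the equivalence ``$e_a=2 \Leftrightarrow A=0$ and $B\neq 0$'' then holds uniformly, and the diagonal computation goes through without special cases. Once regularity is established at every closed point, reducedness of $\E_{\varphi}$ is automatic.
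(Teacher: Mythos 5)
Your proof is correct and follows essentially the same route as the paper's: a pointwise smoothness check for $\E_{\varphi}$ split into the same diagonal and off-diagonal cases, with simplicity of $\varphi$ used exactly as you use it to rule out both first-order data vanishing at once. The paper gets multiplicity one by restricting $\Delta_{\varphi}$ to a fibre of a projection after factoring $\varphi$ explicitly (e.g.\ $\varphi(x)=(x-a)^2f(x)/g(x)$ in the diagonal case), which is your Jacobian computation in different clothing; your Hasse--Schmidt expansion on the diagonal is merely a cleaner, characteristic-uniform packaging of that same step.
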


\begin{proof}
$\E_{\varphi } $ is contained in $\proj^1 \times \proj^1$ and is defined by one equation. So it has no embedded components.
Hence if it nonsingular, then it is reduced. Therefor it is sufficient to show that $\E_{\varphi } $ is nonsingular. Let $P=(a,b) \in \E_{\varphi }(\ffa)$. \\
Let $\cl =\pi_1^{-1}(a)$ and $\cm =\pi_2^{-1}(b)$.
If one of the  intersection multiplicities $I(P;\cl, \E_{\varphi })$ or $I(P;\cm, \E_{\varphi })$ is $1$, then $\E_{\varphi }$ is nonsingular at $P$.
Furthermore  $e_P(\pi_1)=I(P;\cl, \E_{\varphi })$ and $e_P(\pi_2)=I(P;\cm, \E_{\varphi })$ holds for the ramification exponents as in \ref{r-ramexp-intmult}.\\
$(1)$ If $a=b$, then $\varphi $ ramifies at $a$ by Proposition \ref{l-quotient-rule} with exponent $2$, since $\varphi $ is simple.
So $\varphi(x)=(x-a)^2f(x)/g(x)$ and $f(0)\ne 0 \ne g(0)$.
Hence $\Delta_{\varphi } (a,y)= -(y-a)^2f(y)g(0)/(a-y)=(y-a)f(y)g(a)$ and its multiplicity at $y=a$ is $1$, since $f(a)\ne 0 \ne g(a)$.
Therefor $I((a,a);\cl, \E_{\varphi })=e_{(a,a)}(\pi_1)=1$ and $\E_{\varphi }$ is nonsingular at $(a,a)$.\\
$(2)$ If $a\ne b$ and $\varphi $ does not ramify at $a$ and also not at $b$, then $\varphi(x)=(x-a)(x-b)f(x)/g(x)$ and $f(a)\ne 0 \ne g(a)$ and $f(b)\ne 0 \ne g(b)$.
Hence $\Delta_{\varphi } (a,y)= -(y-a)(y-b)f(y)g(a)/(a-y)=(y-b)f(y)g(a)$ and its multiplicity at $y=b$ is $1$, since $f(b)\ne 0 \ne g(a)$.
Therefor  $I((a,b);\cl, \E_{\varphi })=e_P(\pi_1)=1$ and $\E_{\varphi }$ is nonsingular at $(a,b)$.\\
$(3)$ If $a\ne b$ and $\varphi $ ramifies at $a$ or $b$, then not at both, since $\varphi $ is simple.
We may assume by symmetry of $\Delta_{\varphi }(x,y)$ in $x$ and $y$ that $\varphi $ ramifies at $a$ and not at  $b$.
So $\varphi(x)=(x-a)^2(x-b)f(x)/g(x)$ and $f(a)\ne 0 \ne g(a)$ and $f(b)\ne 0 \ne g(b)$.
Hence $\Delta_{\varphi } (a,y)= -(y-a)^2(y-b)f(y)g(a)/(a-y)=(y-a)(y-b)f(y)g(a)$ and its multiplicity at $y=b$ is $1$, since $a \ne b$ and $f(b)\ne 0 \ne g(a)$.
Therefor  $I((a,b);\cl, \E_{\varphi })=e_P(\pi_1)=1$ and $\E_{\varphi }$ is nonsingular at $(a,b)$.\\
Therefor $\E_{\varphi }$ is nonsingular.
\end{proof}

\begin{prop}\label{p-genus}Let $\varphi : \proj^1 \rightarrow \proj^1$ be a separable simple morphism of degree $d\geq 2$.
Then $\E_{\varphi } $ is an  absolutely irreducible nonsingular curve of genus $(d-2)^2$.
\end{prop}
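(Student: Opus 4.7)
The plan is to combine the smoothness established in Proposition \ref{p-simple} with the classical genus formula for smooth curves on the quadric $\proj^1\times\proj^1$.

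First, I would record that, by its very definition, the bihomogenization of $\Delta_{\varphi}$ is bihomogeneous of bidegree exactly $(d-1,d-1)$ and is nonzero, since $\varphi$ is non-constant. Hence $\E_{\varphi}$ is an effective divisor of class $(d-1,d-1)$ on $\proj^1\times\proj^1$, and by Proposition \ref{p-simple} it is smooth as a scheme.

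For absolute irreducibility I would pass to $\ffa$ and argue by contradiction, supposing $\E_{\varphi}=C_1\cup\cdots\cup C_r$ with $r\geq 2$ and each $C_i$ irreducible. Smoothness of $\E_{\varphi}$ forces the components to be pairwise disjoint, since the local ring at an intersection point would have at least two minimal primes and hence not be regular. Write the bidegree of $C_i$ as $(a_i,b_i)$, so that $\sum a_i=\sum b_i=d-1$. The key step is to exclude ``ruling'' components, i.e.\ to show $a_i\geq 1$ and $b_i\geq 1$ for every $i$. A component of bidegree $(a_i,0)$ would contain an $x$-fibre $\{(c_0:c_1)\}\times\proj^1$, forcing $\Delta_{\varphi}(c_0,c_1,y_0,y_1)\equiv 0$ and hence $f(c_0,c_1)\,g(y_0,y_1)\equiv f(y_0,y_1)\,g(c_0,c_1)$ as forms in $y$; since $\varphi$ is a morphism it has no base points, so $(f(c_0,c_1),g(c_0,c_1))\neq(0,0)$, and the identity would force $\varphi$ to be the constant $f(c)/g(c)$, contradicting $d\geq 2$. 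The case of bidegree $(0,b_i)$ is symmetric. With $a_i,b_i\geq 1$ for every $i$, any two distinct components meet with intersection number $C_i\cdot C_j=a_ib_j+a_jb_i>0$ on $\proj^1\times\proj^1$, contradicting disjointness. Hence $r=1$ and $\E_{\varphi}$ is absolutely irreducible.

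Finally, for the genus I would apply adjunction on $\proj^1\times\proj^1$, whose canonical class is $K=-2H_1-2H_2$ with $H_1,H_2$ the two rulings. For the smooth curve $\E_{\varphi}$ of bidegree $(d-1,d-1)$ this yields
\[
2g(\E_{\varphi})-2 \;=\; \E_{\varphi}\cdot(\E_{\varphi}+K) \;=\; 2(d-1)(d-3),
\]
so $g(\E_{\varphi})=(d-1)(d-3)+1=(d-2)^2$. The main obstacle I anticipate is the careful exclusion of the horizontal and vertical line components in the irreducibility step, which uses both that $\varphi$ is a morphism (so $f,g$ have no common zero) and that $\varphi$ is non-constant; the remainder reduces to standard intersection theory on the quadric.
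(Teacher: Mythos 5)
Your proposal is correct and follows essentially the same route as the paper: nonsingularity from Proposition \ref{p-simple}, absolute irreducibility via intersection theory on $\proj^1\times\proj^1$, and the genus by adjunction for a smooth curve of bidegree $(d-1,d-1)$. The only difference is organizational: the paper splits a hypothetical decomposition into just two pieces $\X$ and $\Y$ of complementary bidegrees, for which the B\'ezout number $a_1(d-1-a_2)+a_2(d-1-a_1)$ is automatically positive, whereas your component-by-component argument genuinely needs the extra step ruling out fibre components of bidegree $(a_i,0)$ or $(0,b_i)$ --- a step you supply correctly using that $\varphi$ is a non-constant morphism.
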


\begin{proof}
Let $\varphi : \proj^1 \rightarrow \proj^1$ be a separable map of degree $d\geq 2$ with simple ramification.
Then $\E_{\varphi } $ is reduced and nonsingular by Proposition \ref{p-simple} and of bidegree $(d-1,d-1)$.\\
Suppose $\E_{\varphi } $  is reducible over the algebraic closure.
Then it is the union of $\X$ and $\Y$, say  of bidegrees $(a_1,a_2)$ and $(d-1-a_1,d-1-a_2)$, respectively such that
$(a_1,a_2)\not=(0,0)$ and $(a_1,a_2)\not=(d-1,d-1)$. Without loss of generality we may assume that $\X$ and $\Y$ have no component in common.
So $\deg(\X \cdot \Y)=a_1(d-1-a_2)+a_2(d-1-a_1)>0$  according to the Theorem of B\'{e}zout for the product of projective spaces
\cite[Chapter IV, \S 2.1]{shafarevich:1974} as mentioned in Section \ref{s-divisors}.
Hence $\X $ and $\Y$ have a point in common over the algebraic closure.
So $\E_{\varphi } $ is singular at that point, which is a contradiction.
Therefore $\E_{\varphi } $ absolutely irreducible, that is irreducible over the algebraic closure.\\
A non-singular curve in $\proj^1\times \proj^1$ of bidegree $(m,n)$ has genus $(m-1)(n-1)$.
This is shown by the adjunction formula for a curve on a surface, see \cite[Chapter V, Example 1.5.2]{hartshorne:1977}.
Hence $\E_{\varphi } $ has genus $(d-2)^2$.
\end{proof}

\begin{cor}\label{c-elliptic}Let $\varphi : \proj^1 \rightarrow \proj^1$ be a separable simple morphism of degree $3$.
Then $\E_{\varphi } $ is an  absolutely irreducible nonsingular curve of genus $1$.
\end{cor}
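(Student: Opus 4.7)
The plan is essentially a one-line specialisation. Proposition \ref{p-genus} has already done all the work: for any separable simple morphism of degree $d\geq 2$, the double point scheme $\E_{\varphi}$ is an absolutely irreducible nonsingular curve of genus $(d-2)^2$. Setting $d=3$ gives $(3-2)^2=1$, and the hypotheses ``separable'' and ``simple'' carry over unchanged. So the proof is just: apply Proposition \ref{p-genus} with $d=3$. There is no real obstacle.

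If I wanted to give a self-contained sanity check rather than a black-box citation, I would unpack it two ways. First, directly from the bidegree: $\Delta_{\varphi}$ is a symmetric bihomogeneous polynomial of bidegree $(d-1,d-1)=(2,2)$, Proposition \ref{p-simple} supplies nonsingularity, the B\'ezout argument in the proof of \ref{p-genus} supplies absolute irreducibility, and the adjunction formula on $\proj^1\times\proj^1$ gives genus $(2-1)(2-1)=1$. Second, via the first projection $\pi_1:\E_{\varphi}\to\proj^1$, which has degree $d-1=2$ (a generic fibre over $a$ consists of the $d-1$ points $y\ne a$ with $\varphi(y)=\varphi(a)$); simplicity of $\varphi$ forces $\pi_1$ to be tame and ramified at exactly the four points lying over the $2d-2=4$ simple ramification places of $\varphi$, and Riemann--Hurwitz \ref{thm-RH} yields $2g(\E_{\varphi})-2=2(-2)+4=0$, hence $g(\E_{\varphi})=1$. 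Both routes agree with the stated corollary, so the statement follows immediately.
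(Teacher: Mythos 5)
Your primary argument—specialising Proposition \ref{p-genus} to $d=3$ so that the genus is $(3-2)^2=1$—is exactly the paper's proof, which reads in full ``This is a special case of Proposition \ref{p-genus}.'' The additional sanity checks are fine as far as the adjunction route goes (that is literally the computation inside \ref{p-genus}); only note that your Riemann--Hurwitz count of ``exactly four'' ramification points of $\pi_1$ tacitly assumes tameness and would need adjusting in characteristic $2$, but since that is not the proof you rely on, nothing is at stake.
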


\begin{proof}
This is a special case of Proposition \ref{p-genus}. See also \cite{bos:1987}.
\end{proof}

\begin{rem}\label{r-elliptic}
Let $\varphi : \proj^1 \rightarrow \proj^1$ be a separable simple morphism of degree $3$.
Then $\E_{\varphi } $ is an  absolutely irreducible nonsingular curve of genus $1$.
Any $\fq$-rational point of $\E_{\varphi } $ outside the diagonal gives a pair $(x,y)$
such that $x\ne y$ and $\varphi(x) =\varphi (y)$ and $(x,x)$ and $(y,y)$ not in  $\E_{\varphi } $.
Hence there is a third point $z$, distinct from $x$ and $y$  such that $(x,z)\in \E_{\varphi } $.
$\varphi(x) =\varphi (y)=\varphi (z)$.
The number of $\fq$-rational points of $\E_{\varphi } $ on the diagonal is at most $\deg (D_{\varphi })=4$ by by Lemma \ref{l-quotient-rule}.
For every $(x,x)\in \E_{\varphi } $ there exists a $y$ such that $(x,y), (y,x)\in \E_{\varphi } $.
So we have to exclude for every at most $12$ points from $\E_{\varphi } (\fq )$.
The Hasse-Weil bound \cite[\S 5.2]{stichtenoth:1993} gives $|\E_{\varphi }|\geq q+1-2\sqrt{q} $.
Therefor, if $q\geq 23$, then $|\E_{\varphi }|>12$ and there is a triple $(x,y,z)$ of mutually distinct $\fq$-rational  points of $\proj^1$
such that $\varphi(x) =\varphi (y)=\varphi (z)$.
\end{rem}

\section{Lines in $\proj^3$}\label{s-class-lines}

We start by repeating the observation of Remark \ref{r-lines}.\\
Two chords do not intersect in a point outside $\C_3(q)$,
as a consequence, every point (not in $\C_3(q)$) is contained in a
unique chord.\\
If $p\ne 3$ then we also have the dual statement: Two axes can only be
coplanar in an osculating plane, every non-osculating plane contains
exactly one axis.\\

Let us determine the chord through $(x^3:x^2:x:1)$ and
$(y^3:y^2:y:1)$. There are three cases: $x=y \in\fq \cup\{\infty\}$ and
we have a tangent, or $x\ne y$ in $\fq$ and we have a real chord,
or $y=\bar x\in\ff_{q^2}\setminus\fq $ and we have an imaginary chord.\\

Let $n=xy$ and $t=x+y$ (`Norm' and `Trace' in the imaginary case). An easy
computation shows that the chord is
\begin{align*}
c(x,y)=&\langle (-nt,-n,0,1),(t^2-n,t,1,0)) \rangle  \text{~~if~~}
x,y\ne\infty \\
c(\infty,y)=&\langle (1,0,0,0),(0,y^2,y,1)\rangle  \text{~~if~~} y\ne\infty \\
c(\infty,\infty)=&\langle (1,0,0,0),(0,1,0,0)  \rangle
\end{align*}
The chord $c(x,y)$ is a tangent, or a real chord, or an imaginary chord if
the polynomial $X^2-tX+n$ is a square, or reducible but not a square, or
irreducible, respectively. In other words, if $t^2-4n$ is $0$, or a square,
or a non-square, respectively  if  $q$ is odd;
and $t=0$, or $\tr_2(n/t^2)=0$, or $\tr_2(n/t^2)=1$, respectively  if  $q$
is even.\\

We can easily check that every point not in $\C_3(q)$ is on a unique chord:\\
$(1:0:0:0)$ belongs to $\C_3(q)$;\\
$(w:1:0:0)$ belongs to $c(\infty,\infty)$;\\
$(w:v:1:0)$ belongs to $c(x,y)$, where $x+y=v$ and $xy=v^2-w$;\\
$(w:v:u:1)$ belongs to $c(x,y)$, where $x+y=(uv-w)/(u^2-v)$ and \\
$xy=(uw-v^2)/(v-u^2)$ if $v\not=u^2$; and belongs to $c(\infty,u)$ if $v=u^2$.\\

Next, we determine the axis that is the intersection of the osculating
planes $[1:-3x:3x^2:-x^3]$ and $[1:-3y:3y^2:-y^3]$. Again there are three
cases:
$x=y$ and we have a tangent, or $x\ne y\in \fq$ and we have a real axis, or
$y=\bar x\in\fqtwo \setminus\fq $ and we have an imaginary axis.
Similarly to the previous computation, it is easy to check that
\begin{align*}
a(x,y)=&\langle (-3nt,n-t^2,0,3),(3n,t,1,0) \rangle  \text{~~if~~}
x,y\ne\infty \\
a(\infty,y)=&\langle (-3y^2,0,1,0),(3y,1,0,0)\rangle  \text{~~if~~} y\ne\infty
\\
a(\infty,\infty)=&\langle (1,0,0,0),(0,1,0,0)  \rangle
\end{align*}

\subsection{A partition of lines in $\proj^3$}

From \cite{hirschfeld:1985} we follow the description of the different kinds
of lines and refine the classification. The terminology is slightly different,
it is explained in the beginning of the proof.

\begin{thm}\label{t-lines}
Let $P_a=(a:1)$ for $a \in \fq$  and $P_{\infty }=(1:0)$.
Let $Q_d$ be the place of degree $2$ given by the irreducible polynomial  $x^2-d$, where $d \in \fq^* $ is a non-square.
Choose a fixed irreducible polynomial $x^2+x+n$, that is with discriminant $1-4n$ being a non-square in case $q$ is odd,
and $tr(n)=1$ if $q$ is even. Let $Q$ be the place of degree $2$ given by the irreducible polynomial  $x^2+x+n$.\\
The set of lines of $\proj^3(q)$ are partitioned in the table below.
The parameters $u$, $v$ and $d$ in the table are fixed and chosen such that $u$, $3v$ and $d$ are non-squares.
Some classes only occur for characteristic $2$ or $3$ and are indicated by $\co_i(2)$ and $\co_i(3)$, respectively,
and $\co_i'$ is not defined in characteristic $3$.
All classes except $\co_6$ form orbits under the action of $G_q$.\\
For every orbit a representative line $\cl $, the corresponding rational function $\varphi=f(x)/g(x)$, the base divisor $B_{\varphi}$  of $\varphi $,
and the ramification divisor $R_{\tilde{\varphi}}$ and different divisor $D_{\tilde{\varphi} }$ of the  associated morphism $\tilde{\varphi}$ are given.
The two vectors generating the line $\cl $ are given in the first and second row of the corresponding class.
The $f(x)$ and $R_{\tilde{\varphi}}$ are given in the first row, and $g(x)$ and $D_{\tilde{\varphi} }$ in the second row.\\
The unisecant, osculating and plane are abbreviated by unisec., oscul. and pl., respectively.
\end{thm}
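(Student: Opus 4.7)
The plan is to apply Proposition \ref{p-rat-fct-codim2} to translate the classification of lines in $\proj^3(q)$ into that of non-constant rational functions $\varphi : \proj^1 \dashrightarrow \proj^1$ of degree $3$ defined over $\fq$. A line $\cl = \cl_\varphi$ depends only on the $L$-equivalence class of $\varphi$, while the induced $G_q = PGL(2,q)$-action on $\C_3(q) \subset \proj^3(q)$, extended to $\proj^3(q)$ via the matrix displayed before Proposition \ref{p-planes}, corresponds exactly to pre-composition by $PGL(2,q)$, i.e.\ $R$-equivalence. Consequently $G_q$-orbits of lines in $\proj^3(q)$ are in bijection with $RL$-equivalence classes of degree-$3$ rational functions on $\proj^1$ over $\fq$.

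The invariants separating these $RL$-classes are exactly those tabulated in the theorem: the base divisor $B_{\varphi}$, whose support is $\cl \cap \C_3(\overline{\fq})$, together with the ramification divisor $R_{\tilde{\varphi}}$ and the different divisor $D_{\tilde{\varphi}}$ of the associated morphism $\tilde{\varphi}$ of degree $3 - \deg B_{\varphi}$. The value $\deg B_{\varphi} \in \{0,1,2\}$, combined with the place-degree structure of $B_{\varphi}$, already decides whether $\cl$ is a passant (subsuming imaginary chords), a unisecant (tangent or true unisecant) or a real chord, in accordance with Remark \ref{r-lines}.

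I would split the analysis into three blocks. \emph{(i) $\deg B_{\varphi} = 0$:} then $\tilde{\varphi} = \varphi$ is a degree-$3$ morphism and the Riemann--Hurwitz formula of Theorem \ref{thm-RH} gives $\deg D_{\varphi} = 4$. Galois-invariant effective divisors of degree $4$ on $\proj^1(\overline{\fq})$, subject to $d_P \geq e_P - 1$ with equality in the tame case by Remark \ref{r-different}, enumerate the possible $D_{\varphi}$, and Remark \ref{r-deg-ram-ind} guarantees that distinct ramification places remain distinct under $\tilde{\varphi}$. For each admissible $D_{\varphi}$ I would produce a normal form by applying an element of $PGL(2,q)$ on the right to move the ramification support to standard places chosen from $\{P_0, P_1, P_\infty, Q_d, Q\}$, and then solve for $f(x), g(x)$ of the required shape, reading off the two generators of $\cl$ as the rows of the resulting $2 \times 4$ coefficient matrix. \emph{(ii) $\deg B_{\varphi} = 1$:} then $\tilde{\varphi}$ is a degree-$2$ morphism and Propositions \ref{p-deg=2} and \ref{p-deg=2pijk} enumerate its four $RL$-classes; combined with the choice of the rational or degree-$2$ base place, this yields the sub-orbits and reproduces the tangent/true-unisecant split of Remark \ref{r-lines}. \emph{(iii) $\deg B_{\varphi} = 2$:} then $\tilde{\varphi}$ is a M\"obius map so its $RL$-class is unique, and the type of the degree-$2$ effective divisor $B_{\varphi}$ distinguishes real chord, tangent and imaginary chord.

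The hard parts are the characteristic-$2$ and characteristic-$3$ sub-cases, where wild ramification forces $d_P > e_P - 1$ and must be treated by direct computation from Remark \ref{r-different}; this is what produces the special classes $\co_i(2)$, $\co_i(3)$ and the suppression of $\co_i'$ in characteristic $3$, and it meshes with the special behaviour of osculating planes around Proposition \ref{p-planes}. The second subtlety is class $\co_6$ of true passants not in an osculating plane: here $\tilde{\varphi}$ is a simple degree-$3$ morphism, Corollary \ref{c-elliptic} identifies the double point scheme $\E_{\tilde{\varphi}}$ as an elliptic curve, and Remark \ref{r-catalan} shows that a generic different divisor of degree $4$ is realised by two distinct morphisms. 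Consequently the $j$-invariant of $\E_{\tilde{\varphi}}$ varies over $\co_6$, so $\co_6$ is a union of $G_q$-orbits rather than a single orbit, which is exactly the exception singled out in the statement.
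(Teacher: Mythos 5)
Your translation of the problem via Proposition \ref{p-rat-fct-codim2} --- identifying $G_q$-orbits of lines with $RL$-equivalence classes of degree-$3$ rational functions over $\fq$ and stratifying by $\deg B_\varphi \in \{0,1,2\}$ --- is sound and is indeed the language the paper's proof works in. But your classification strategy has a genuine gap: you assert that the tabulated invariants $(B_{\varphi}, R_{\tilde{\varphi}}, D_{\tilde{\varphi}})$ separate the $RL$-classes and that one normal form per admissible different divisor suffices. The theorem's own table refutes this: $\co_{8.1}^-(3)$ and $\co_{8.1}^+(3)$ have identical base divisor ($0$), identical ramification divisor ($2P_{\infty}$) and identical different divisor ($4P_{\infty}$), yet are distinct orbits, distinguished only by the quadratic character of the parameter $u$ in $x^3-ux$. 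The same square/non-square dichotomy, invisible to the divisor data until a representative is fixed, is what forces the splittings $\co_5^{\pm}$ and $\co_5'^{\pm}$. Moreover Remark \ref{r-catalan} already tells you that a fixed generic effective divisor of degree $4$ is the different divisor of \emph{two} distinct degree-$3$ morphisms, so in the block $\deg B_\varphi = 0$ the different divisor can never be a complete invariant; ``enumerate Galois-invariant degree-$4$ divisors and solve for $f,g$'' cannot by itself produce the correct list of orbits.

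The second omission is that you never verify transitivity on each purported class nor compute the orbit sizes, and that is where essentially all the work in the paper's proof lies. The paper proceeds in the opposite direction: it imports the coarse classification from Hirschfeld's Lemma 21.1.4, refines it by explicit computations with the point stabilizer $G_{q,P}$ (the matrices $M_{a,c}$) and with stabilizers of representative lines (e.g.\ the stabilizer of $\cl_v$ for passants in an osculating plane, generated by $\mathrm{diag}(1,-1,1,-1)$, which yields the orbit sizes by orbit--stabilizer), and only afterwards reads off $\varphi$, $B_{\varphi}$, $R_{\tilde{\varphi}}$, $D_{\tilde{\varphi}}$ from each representative via the matrix $H$ with $LH^T=0$. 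To repair your argument you would need, for each class, an explicit transporter computation showing that any two rational functions with the given data are $RL$-equivalent --- precisely the stabilizer analysis you skipped. (Your $j$-invariant remark for $\co_6$ is also unsubstantiated as stated; the elementary observation $|\co_6|=q(q-1)(q^2-1)>|G_q|=q(q^2-1)$ is how the paper sees that $\co_6$ splits into several orbits, though the theorem itself only claims that the \emph{other} classes are single orbits.)
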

\newpage
\begin{center}
\[
\!\!\!\!\!\!\!\!\!\!\!\!\!\!\!\!\!\!\!\!\!\!\!\!\!\!\!\!
\begin{array}{|l||l|c|c|c|c|c|}
\hline
\text{Class} &\text{Name}                    &\text{Size}          & \cl     &\varphi(x)=f(x)/g(x) & B_{\varphi}   & R_{\tilde{\varphi}}; D_{\tilde{\varphi}}\\
\hline
\hline
\co_1         &\text{Real chords}            & \frac12q^2+\frac12q & (1,0,0,0) & x^2               &  P_0+P_{\infty}& 0\\
              &                              &                     & (0,0,0,1) & x                 &               & 0\\
\co_1'        &\text{Real axes}              & \frac12q^2+\frac12q & (0,1,0,0) & x^3               &  0            & 2P_0+2P_{\infty}\\
              &                              &                     & (0,0,1,0) & 1                 &               & 2P_0+2P_{\infty}\\
\co_2         &\text{Tangents}               & q+1                 & (0,0,1,0) & x^3               & 2P_0          & 0 \\
              &                              &                     & (0,0,0,1) & x^2               &               & 0 \\
\co_3         &\text{Imaginary}              &\frac12q^2-\frac12q  & (n,-n,0,1) & x^3+(n-1)x-nx     & Q             & 0\\
              &\text{chords}                 &                     & (1-n,-1,1,0) & x^2+x+n           &               & 0\\
\co_3'        &\text{Imaginary}              &\frac12q^2-\frac12q  & (3n,n-1,0,3) & x^3-3nx-n         & 0             & 2Q\\
              &\text{axes}                   &                     & (3n,-1,1,0) & x^2+x+\frac13(1-n)&               & 2Q\\
\co_4         &\text{True unisec.}           & q^2+q               & (0,1,0,0) & x^3               & P_0           & P_0+P_{\infty}\\
              &\text{in oscul. pl.}          &                     & (0,0,0,1) & x                 &               & P_0+P_{\infty}\\
\co_4^-(2)    &\text{True unisec.}           & q+1                 & (0,1,0,0) & x^3               & P_0           &\mbox{purely} \\
              &\text{in oscul. pl.}          &                     & (0,0,0,1) & x                 &               &\mbox{inseparable}\\
\co_4^+(2)    &\text{True unisec.}           & q^2-1               & (0,1,1,0) & x^3               & P_0           & P_0\\
              &\text{in oscul. pl.}          &                     & (0,0,0,1) & x^2+x             &               & 2P_0\\
\co_5^-       &\text{Unisec. not}            &\frac12q^3-\frac12q  & (-d,0,1,0) & x^3+dx            & P_0           & Q_d\\
              &\text{in oscul. pl.}          &                     & (0,0,0,1) & x^2               &               & Q_d\\
\co_5^+       &\text{Unisec. not}            &\frac12q^3-\frac12q  & (-1,0,1,0) & x^3+x             & P_0           & P_1+P_{-1}\\
              &\text{in oscul. pl.}          &                     & (0,0,0,1) & x^2               &               & P_1+P_{-1}\\
\co_5(2)      &\text{Unisec. not}            & q^3-q               & (1,0,1,0) & x^3+x             & P_0           & P_1\\
              &\text{in oscul. pl.}          &                     & (0,0,0,1) & x^2               &               & 2P_1\\
\co_5'^-      &\text{Passants in}            &\frac12q^3-\frac12q  & (0,0,1,0) & x^3               &  0            & Q_{3v}+2P_0\\
              &\text{oscul. pl.}             &                     & (0,v,0,1) & x^2-v             &               & Q_{3v}+2P_0\\
\co_5'^+      &\text{Passants in}            &\frac12q^3-\frac12q  & (0,0,1,0) & x^3               &  0            & P_1+P_{-1}+2P_0\\
              &\text{oscul. pl.}             &                     & (0,\frac13,0,1) & x^2-\frac13       &               & P_1+P_{-1}+2P_0\\
\co_5'(2)     &\text{Passants in}            & q^3-q               & (0,0,1,0) & x^3               &  0            & P_1+2P_0\\
              &\text{oscul. pl.}             &                     & (0,1,0,1) & x^2+1             &               & 2P_1+2P_0\\
\co_6         &\text{Passants not}           & q^4-q^3            &           &                   &  0            &\mbox{simple}\\
              &\text{in oscul. pl.}          & -q^2+q              &           &                   &               & \\
\co_7(3)      &\text{Axis of $\bg_3$}        & 1                   & (0,1,0,0) & x^3               & 0             &\mbox{purely} \\
              &                              &                     & (0,0,1,0) & 1                 &               &\mbox{inseparable}\\
\co_{8.1}^-(3)&\text{Passants}               &\frac12q^2-\frac12   & (0,1,0,0) & x^3-ux            & 0             & 2P_{\infty}\\
              &\text{meeting axis}           &                     & (u,0,1,0) & 1                 &               & 4P_{\infty}\\
\co_{8.1}^+(3)&\text{Passants}               &\frac12q^2-\frac12   & (0,1,0,0) & x^3-x             & 0             & 2P_{\infty}\\
              &\text{meeting axis}           &                     & (1,0,1,0) & 1                 &               & 4P_{\infty}\\
\co_{8.2}(3)  &\text{Passants}               & q^3-q               & (1,1,0,0) & x^3-x^2           & 0             & P_0+2P_{\infty}\\
              &\text{meeting axis}           &                     & (0,0,1,0) & 1                 &               & P_0+3P_{\infty}\\
\hline
\end{array}
\]
\end{center}

\begin{proof}Everything is shown in Lemma 21.1.4 of \cite{hirschfeld:1985},
except the subdivisions of $\co_4$, $\co_5$, $\co_5'$ and $\co_8$, and the statements about the rational functions.
We use the term true unisecant for non-tangent lines that intersect $\C_3(q)$ in exactly one point.
Similarly, for external lines we also use the term passant, and such a line is called a true passant if it is not a chord.\\
Every representative line $\cl $ of an orbit is given by two vectors, that is by a $2 \times 4 $ matrix $L$ of rank $2$.
Let $H$ be the $2 \times 4 $ matrix in row reduced echelon such that form $LH^T=0$.
Then the rows of $H$ give the coefficients of equations of the line $\cl $, and  the rational function $\varphi _{\cl }$ by Proposition \ref{p-rat-fct-codim2}.
\indent
$\co_1$: real chords form a single orbit. A representative of a line in this orbit is given by
$\cl=c(0,\infty)=\langle (1,0,0,0),(0,0,0,1) \rangle $. So $H$ has rows $(0,1,0,0),(0,0,1,0)$.
Hence  $\varphi(x)=x^2/x$, $\tilde{\varphi}(x)=x$,  and $\varphi $ has base divisor $P(0)+P_{\infty}$,  and
$R_{\tilde{\varphi}}= D_{\tilde{\varphi}}=0 $.\\
\indent
$\co_1'$: real axes form a single orbit ($p\not=3$). So it suffices to consider a
particular line $\cl=a(0,\infty)=\langle(0,1,0,0),(0,0,1,0)\rangle $. So $H$ has rows $(1,0,0,0),(0,0,0,1)$.
Hence  $\varphi(x)=\tilde{\varphi}(x)=x^3$, and $R_{\varphi }= D_{\varphi }=2P(0)+2P_{\infty}$.\\
\indent
$\co_3$: imaginary chords form a single orbit with representative $\cl=c(\xi,\bar{\xi})=\langle (n,-n,0,1),(1-n,-1,1,0) \rangle $,
where $\xi,\bar{\xi}$ are the zeros of $X^2+X+n$. So $H$ has rows $(1,0,n-1,-n),(0,1,1,n)$.
Hence $\varphi(x)=(x^3+(n-1)x-nx)/(x^2+x+n)$ and $\tilde{\varphi}(x)=x-1$, and $\varphi $ has base divisor $Q$, and
$R_{\tilde{\varphi}}= D_{\tilde{\varphi}}=0 $. \\
\indent
$\co_3'$: imaginary axes form a  single orbit ($p\not=3$) with representative  $\cl=a(\xi,\bar{\xi})=\langle (3n,n-1,0,3),(3n,-1,1,0) \rangle $.
So $H$ has rows $(1,0,-3n,-n),(0,1,1,\frac13(1-n))$.
Hence $\varphi(x)=\tilde{\varphi}(x)=(x^3-3nx-n)/(x^2+x+\frac13(1-n))$ and $R_{\varphi }= D_{\varphi }=2Q$.\\
\indent
$\co_2$, $\co_4$ and $\co_5$: unisecants.\\
It is sufficient to look at the unisecants through $P(0)=(0:0:0:1)$ So,
we may apply elements from the stabilizer of $P = P(0)$ in $G_q$, that is $G_{q,P}$.
This subgroup consists of the matrices
$$
M_{a,c}=
\begin{pmatrix}
a^3  & 0    & 0   & 0 \\
a^2c & a^2  & 0   & 0 \\
ac^2 & 2ac  & a   & 0 \\
c^3  & 3c^2 & 3c  & 1
\end{pmatrix}.
$$
There are three cases: $\co_2$ unisecants that are tangent, $\co_4$ unisecants in
an osculating plane, and $\co_5$ unisecants not in an osculating plane. This is
the partition in \cite[Lemma 21.1.4]{hirschfeld:1985}, we are going to refine this.\\
The first case is that the line $\langle (0,0,0,1),(0,0,1,0)\rangle$ is mapped to itself. This gives:\\
\indent
$\co_2$: tangents with representative $\cl=\langle (0,0,0,1),(0,0,1,0)\rangle$.
These lines form a single orbit (of size $q + 1$).
So $H$ has rows $(1,0,0,0),(0,1,0,0)$.
Hence  $\varphi(x)=x^3/x^2$, $\tilde{\varphi}(x)=x$, and $\varphi $ has base divisor $2P(0)$, and
$R_{\tilde{\varphi}}= D_{\tilde{\varphi}}=0 $.\\
A second type of line is $\cl=\langle (0,0,0,1),(0,1,u,0)\rangle$, $u\ne\infty$.
This line in the osculating plane $[1:0:0:0]$,  is mapped to $\langle (0,0,0,1),(0,1,(u+2c)/a,1)$
by using $M_{a,c}$.\\
Consider first the case that $q$ is odd. Choosing $c=-u/2$ gives:\\
\indent
$\co_4$: true unisecants in an osculating plane with $\cl=\langle (0,0,0,1),(0,1,0,0)\rangle$
and this also shows that they form a single orbit of size $q(q + 1)$.
So $H$ has rows $(1,0,0,0),(0,0,1,0)$.
Hence  $\varphi(x)=x^3/x$, $\tilde{\varphi}(x)=x^2$, and $\varphi $ has base divisor $P(0)$, and
$R_{\varphi }= D_{\varphi }=P(0)+P_{\infty}$.\\
We continue with the case that $q$ is even.
Now $\langle (0,0,0,1), (0,1,u/a,0)\rangle$ is the image of $\cl$ under the map $M_{a,c}$.
If $u=0$ we find the same as in the case $q$ odd. If $u\ne0$ we get $\cl=\langle (0,0,0,1),(0,1,1,0)\rangle$.
This gives two orbits:\\
\indent
$\co_4^-(2)$: with representative $\cl=\langle (0,0,0,1),(0,1,0,0)\rangle$. This orbit has size $q+1$.
So $H$ has rows $(1,0,0,0),(0,0,1,0)$.
Hence  $\varphi(x)=x^3/x$ has base divisor $P(0)$, and $\tilde{\varphi}(x)=x^2$ is purely inseparable.\\
\indent
$\co_4^+(2)$: with representative $\cl=\langle (0,0,0,1),(0,1,1,0)\rangle$; of size $q^2-1$.
So $H$ has rows $(1,0,0,0),(0,1,1,0)$.
Hence  $\varphi(x)=x^3/(x^2+x)$, $\tilde{\varphi}(x)=x^2/(x+1)$,
and $\varphi $ has base divisor $P(0)$, and $R_{\tilde{\varphi}}= P(0)$, $D_{\tilde{\varphi}}= 2P(0)$.  \\
The third type of line is $\langle (0,0,0,1),(1,u,v,0)\rangle$, corresponding essentially to:
\indent
$\co_5$: unisecants not in an osculating plane.\\
This line is mapped by $M_{a,c}$ to $\langle (0,0,0,1),(1,(u+c)/a,(c^2+2cu+v)/a^2,0)\rangle$
(by making the last coordinate 0) and we now take $c=-u$ and obtain
$\langle (0,0,0,1),(1,0,(v-u^2)/a^2,0)\rangle$. This gives the following two cases:\\
If $v=u^2$, then $\cl $ is the secant through $P(0)$ and $P(\infty)$, so we have already
seen these lines.\\
If $v\ne u^2$, let $w=(u^2-v)/a^2\ne0$ and $d =w^{-1}$. Choosing different $a$'s does not change
the quadratic character of $d$, hence we get $\cl = \langle (0,0,0,1),(-d,0,1,0)\rangle$ with
$d\ne0$ being a square or a non-square if $q$ is odd, and one case if $q$ is even. Consider the two cases if $q$ is odd:\\
\indent
$\co_5^-$: $d$ is a non-square, with representative $\cl = \langle (0,0,0,1),(-d,0,1,0)\rangle$.
This orbit has size $\frac12q(q^2-1)$. So $H$ has rows $(1,0,d,0),(0,1,0,0)$.
Hence  $\varphi(x)=(x^3+dx)/x^2$, $\tilde{\varphi}(x)=(x^2+d)/x$,
and $\varphi $ has base divisor $P(0)$, and $R_{\tilde{\varphi}}=D_{\tilde{\varphi}}=Q_d$.\\
\indent
$\co_5^+$: $d$ is a non-zero square, we can take $d=1$ with representative $\cl = \langle (0,0,0,1),(-1,0,1,0)\rangle$.
This orbit has size $\frac12q(q^2-1)$, too.
So  $\varphi(x)=(x^3+x)/x^2$, $\tilde{\varphi}(x)=(x^2+1)/x$,
and $\varphi $ has base divisor $P(0)$, and $R_{\tilde{\varphi}}=D_{\tilde{\varphi}}=P(1)+P(-1)$.\\
\indent
$\co_5(2)$: if $q$ is even every non-zero element is a square and we take $d=1$ with representative $\cl = \langle (0,0,0,1),(1,0,1,0)\rangle$.
This orbit has size $q(q^2-1)$.
So  $\varphi(x)=(x^3+x)/x^2$, $\tilde{\varphi}(x)=(x^2+1)/x$,
and $\varphi $ has base divisor $P(0)$, and $R_{\tilde{\varphi}}=P(1)$ and $D_{\tilde{\varphi}}=2P(1)$.\\
\indent
$\co_5'$: passants in an osculating plane, $p\not=3$. \\
We take our favourite osculating plane $\pi=[1:0:0:0]$ at the point
$P=(0:0:0:1)$. The stabilizer group $G_{q,P}$ of $P$ under $G_q$ is as before and has size $q(q-1)$. In this plane
we take our favourite external line: $\cl_v=\langle (0,0,1,0), (0,v,0,1)\rangle \subseteq \pi$.
It is easy to check that the stabilizer of $\cl_v$ under $G_{q,P}$ is generated by ${diag}(1,-1,1,-1)$, and $\cl_v$ and $\cl_{a^2v}$ are in the same orbit.
So the orbit of $\cl_v$ under $G_{q,P}$ has size $q(q-1)$ if $q$ is even and $\frac12q(q-1)$ if $q$ is odd.
Now $H$ has rows $(1,0,0,0),(0,1,0,-v)$.
Hence  $\varphi(x)=\tilde{\varphi}(x)=x^3/(x^2-v)$, and $\varphi'(x)=x^2(x^2-3v)/(x^2-v)^2$.
If $q$ is odd, then $\cl_{u}$ and $\cl_v$ such that $3u$ is a non-zero square and $3v$ is a non-square  are in two different orbits
and together they are all external lines in $\pi$.\\
\indent
$\co_5'^-$: with $3v$ a non-square with representative $\cl =\langle (0,0,1,0), (0,v,0,1),\rangle$. This orbit has size $\frac12q(q^2-1)$, and
$R_{\varphi}=D_{\varphi}=2P(0)+Q_{3v}$.\\
\indent
$\co_5'^+$: with $3v$ a non-zero square, we take $v=\frac13$ with representative $\cl =\langle(0,\frac13,0,1),(0,0,1,0)\rangle$.
This orbit has size $\frac12q(q^2-1)$, and $\varphi(x)=\tilde{\varphi}(x)=x^3/(x^2-\frac13)$ and
$R_{\varphi}=D_{\varphi}=2P(0)+P(1)+P(-1)$.\\
\indent
$\co_5'(2)$: if $q$ is even every non-zero element is a square and we take $v=1$  with representative $\cl =\langle(0,1,0,1),(0,0,1,0)\rangle$.
This orbit has size $q(q^2-1)$, and $\varphi(x)=\tilde{\varphi}(x)=x^3/(x+1)^2$, and $R_{\tilde{\varphi}}=2P(0)+P(1)$ and $D_{\tilde{\varphi}}=2P(0)+2P(1)$.\\
\indent
$\co_6=\co_6'$: true passants not in an osculating plane.\\
Let $\varphi $ be a rational function in this class. Then $\varphi $ is a morphism, since the corresponding line is a passant so it does not intersect $\C_3$.
The ramification exponents $e_P(\varphi) $ for all places $P$ are at most $2$, since the passant is not in an osculating plane by Remark \ref{r-ramexp-intmult}.
Hence $R_{\varphi}$ is simple.
Moreover $\varphi $ does not ramify at two distinct points in a fibre $\varphi^{-1}(Q)$ for all places $Q$ by Proposition \ref{p-deg-ram-ind}.
Hence $\varphi $ is a simple morphism. \\
The morphism $\varphi :\proj^1 \rightarrow \proj^1$ of degree $3$ gives an extension $L$ of degree $3$ of $K=\fq(x)$, the field of rational functions in one variable, and there exists a unique intermediate field $S$, $K \subseteq S \subseteq L$, such that $S/K$ is separable and $L/S$ is purely
inseparable, see \cite[Appendix 8]{stichtenoth:1993}. If the extension degrees of $K \subseteq S \subseteq L$ are $s$ and $l$, respectively
then $sl=3$ the degree of the extension $L/K$.
So either $S=L$ and $\varphi $ is separable, or $K=S$ and $L/K$ is purely inseparable and $p=3$, so $\varphi(x)=x^3$ after a $RL$-transformation which is case $\co_7(3)$.
Therefor $\varphi $ is a separable simple morphism.\\
\indent
$\co_7(3)$: the axis of $\bg_3$, $p=3$ with $\cl=\langle(0,1,0,0),(0,0,1,0)\rangle $.
So $H$ has rows $(1,0,0,0),(0,0,0,1)$.
Hence  $\varphi(x)=\tilde{\varphi}(x)=x^3$ and $\varphi $ is purely inseparable.\\
\indent
$\co_8(3)$: passants meeting the axis, $p=3$.\\
Every plane containing the axis is an osculating plane.
So every line meeting the axis is in an osculating plane. We may take as osculating plane $\pi(\infty)=[0:0:0:1]$, that is given by $X_3=0$.
Let $\cl$ be a passant contained in $\pi(\infty)$ meeting the axis given by $X_0=X_3=0$ at the point $P=(0:u:v:0)$.
All $\varphi $ in $G_q$ leave the axis invariant. If $\varphi \in G_q$  leaves  $\pi(\infty)$ invariant, then it fixes also $P(\infty)$.
So $\varphi (x)=ax+b$, that is with $c=0$ and $d=1$, and  $\varphi $ leaves $c(\infty,\infty)$,
that is the tangent line of $\pi(\infty)$ given by $X_2=X_3=0$ invariant.
Hence $\varphi $ leaves the intersection of the axis and $c(\infty,\infty)$ invariant. So it leaves $P_1=(0:1:0:0)$ invariant.
Indeed $\varphi (0:u:v:0)=(0:au-bv:v:0)$. So $\varphi (P_1)=P_1$, and for all $v\ne0$ there exists a $\varphi $ with $a=1$ and $b=u/v$
such that  $\varphi (0:u:v:0)=P_2=(0:0:1:0)$. Therefor we may assume that the passant meets the axis in $P_1$ or $P_2$.
This gives two cases:\\
Passants in $\pi(\infty)$ through $P_1$ are given by $\cl_{1,u}=\langle ((0,1,0,0), (u,0,1,0),\rangle$ with $u\ne 0$.
The transformation $\varphi(x)=ax$ with $a=1/u$ maps $\cl_{1,u}$ to $\cl_{1,a^2u}$ which gives two orbits, since $q$ is odd:\\
\indent
$\co_{8.1}(3)^-$: $u$ a non-square with representative $\cl=\langle (0,1,0,0), (u,0,1,0),\rangle$.
This orbit has size $\frac12(q+1)(q-1)$.
So $H$ has rows $(1,0,-u,0),(0,0,0,1)$.
Hence  $\varphi(x)=\tilde{\varphi}(x)=x^3-ux$ and $R_{\varphi}=2P(\infty)$, $D_{\varphi}=4P(\infty)$.\\
\indent
$\co_{8.1}(3)^+$: $u$ a non-zero square, we can take $u=1$ with representative $\cl=\langle (0,1,0,0), (1,0,1,0),\rangle$.
This orbit has size $\frac12(q+1)(q-1)$.\\
Passants in $\pi(\infty)$ through $P_2$ are given by $\cl_{2,v}\langle (0,0,1,0), (v,1,0,0)\rangle$ with $v\ne 0$.
The transformation  $\varphi(x)=ax$  maps $\cl_{2,v}$ to $\cl_{2,av}$  which gives one orbit:\\
\indent
$\co_{8.2}(3)$: with representative  $\cl=\langle (0,0,1,0),(1,1,0,0)\rangle$. This orbit has size $(q+1)(q^2-q)$.
So $H$ has rows $(1,-1,-0,0),(0,0,0,1)$.
Hence  $\varphi(x)=\tilde{\varphi}(x)=x^3-x^2$ and $R_{\varphi}=P(0)+2P(\infty)$, $D_{\varphi}=P(0)+3P(\infty)$.\\
\end{proof}

\begin{rem}
$|\co_6|=q(q-1)(q^2-1)$ and $|G_q|=q(q-1)(q+1)$. Hence $\co_6$ is subdivided in at least $q-1$ orbits.
Without proof we state that if $p\ne 2$ and $p\ne 3$, then $\co_6$ is subdivided in $5$ subclasses with ramification divisors
$P_1+P_2+P_3+P_4$, $P_1+P_2+Q$, $Q_1+Q_2$, $P+R$ and $S$, where the $P_i$ and $P$ are places of degree $1$, the $Q_i$ and $Q$ are places of degree $2$,
and $R$ and $S$ are places of degree $3$ and $4$, respectively.
Moreover the cross-ratio of the $4$ points over $\bar{\fq}$ of the ramification divisor determines the orbit.
\end{rem}

\begin{rem}
All cases of Theorem \ref{t-lines} except $\co_6$ are obtained in \cite[Theorem 3.1]{davydov:2021b} and cited also in \cite[Theorem 2.3]{davydov:2021a}.
Our classification is in agreement with the results of \cite{davydov:2021b}.
\end{rem}

\subsection{The determination of $\mu_q$}\label{s-det-muq}
In the table of the following proposition ($q\geq 23$) rows indicate the classes of lines and column headers
indicate $q$ mod $6$.

\begin{prop}\label{p-table}Let $q\geq 23$.
Then the entries in the following table indicate whether a case contributes to $\mu_q$ by a plus sign, and by a minus sign otherwise.
\[
\begin{array}{|c|r|c|c|c|c|c|}\hline
\text{Class}   & \text{Size}                & 1(6) & 2(6) & 3(6) & 4(6) & 5(6) \\ \hline
\co_1          &\frac12q^2+\frac12q    & -    & -    & -    & -    & -    \\
\co_1'         &\frac12q^2+\frac12q    & +    & -    &      & +    & -    \\
\co_2          & q+1                   & -    & -    & -    & -    & -    \\
\co_3          &\frac12q^2-\frac12q    & -    & -    & -    & -    & -    \\
\co_3'         &\frac12q^2-\frac12q    & -    & +    &      & -    & +    \\
\co_4          & q^2+q                 & +    &      & +    &      & +    \\
\co_4^-(2)     & q+1                   &      & -    &      & -    &     \\
\co_4^+(2)     & q^2-1                 &      & +    &      & +    &      \\
\co_5          & q^3-q                 & +    & +    & +    & +    &+     \\
\co_5'         & q^3-q                 & +    & +    &      & +    & +    \\
\co_6          & q^4-q^3-q^2+q         & +    & +    & +    & +    & +    \\
\co_7(3)       & 1                     &      &      & -    &      &      \\
\co_{8.1}^-(3) &\frac12q^2-\frac12     &      &      & -    &      &      \\
\co_{8.1}^+(3) &\frac12q^2-\frac12     &      &      & +    &      &      \\
\co_{8.2}(3)   & q^3-q                 &      &      & +    &      &      \\ \hline
\end{array}
\]
\end{prop}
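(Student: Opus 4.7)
The plan is to verify each row of the table by converting the geometric condition into a fibre-counting question for the rational function attached to a representative line, via Proposition \ref{p-rat-fct-codim2}. Indeed, Remark \ref{r-ramexp-intmult} identifies the pencil of planes through a line $\cl$ with the pencil $\sum_j (f_j - u g_j) X_j$, and such a plane meets $\C_3$ in the divisor $B_\varphi + \tilde\varphi^{-1}(u)$ of degree $3$. Hence $\cl$ lies in a $3$-plane iff for some $u \in \proj^1(\fq)$ this divisor is a sum of three distinct $\fq$-rational places. Theorem \ref{t-lines} supplies all the data needed to settle this class by class.

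I would first dispose of the uniform $-$ rows. Any class whose base divisor $B_\varphi$ contains a place of degree $\geq 2$ (like $\co_3$) or a rational place with multiplicity $\geq 2$ (like $\co_2$) never has a pencil plane meeting $\C_3$ in three distinct rational places, while class $\co_1$ is excluded from $\mu_q$ by definition. For the classes with $\deg B_\varphi = 1$ --- namely $\co_4$, $\co_4^\pm(2)$, $\co_5^\pm$ and $\co_5(2)$ --- the associated morphism $\tilde\varphi$ has degree $2$, and the normal forms put the base point at $P_0$ with $\tilde\varphi(0) = P_\infty$, so the base point is never in a rational finite fibre. The question then reduces to whether $\tilde\varphi$ has a rational fibre of two distinct points, which is read off from $p_{1,2,1}$ in Proposition \ref{p-deg=2pijk}; this puts $+$ in every such row except $\co_4^-(2)$, where $\tilde\varphi$ is purely inseparable.

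The central step is the $\co_6$ row. Here $B_\varphi = 0$, $\varphi$ has degree $3$, and the defining condition (passant not in an osculating plane) forces every ramification index to equal $2$ by Remark \ref{r-ramexp-intmult}, while Proposition \ref{p-deg-ram-ind} prevents two distinct ramification points from sharing an image in a degree-$3$ morphism (otherwise the fibre would absorb degree $\geq 4$). Thus $\varphi$ is simple, and Proposition \ref{p-genus} gives that the double point scheme $\E_\varphi$ is absolutely irreducible nonsingular of genus $1$. The Hasse--Weil bound $|\E_\varphi(\fq)| \geq q + 1 - 2\sqrt q$, together with the hypothesis $q \geq 23$ and the exceptional-point bookkeeping of Remark \ref{r-elliptic}, then produces a triple of distinct $\fq$-rational points with common image, yielding the sought $3$-plane and the uniform $+$ for $\co_6$.

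The remaining degree-$3$ rows --- $\co_1'$, $\co_3'$, $\co_5'^\pm$, $\co_5'(2)$, $\co_7(3)$, $\co_{8.1}^\pm(3)$, $\co_{8.2}(3)$ --- all fall outside the simple framework because $\varphi$ has ramification of index $3$ or is inseparable, so I would handle them by direct factorisation of the cubic $f(x) - u g(x)$ listed in Theorem \ref{t-lines}. For $\co_1'$ and $\co_7(3)$ with $\varphi = x^3$, the analysis reduces to the cube-root structure of $\fq^*$, giving $+$ for $q \equiv 1 \pmod 3$ in the separable case and $-$ both for $q \equiv 2 \pmod 3$ and in the inseparable case. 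For $\co_3'$, a direct computation gives $\varphi(\alpha) = 3\alpha$ at the ramification point $\alpha$ of the degree-$2$ place $Q$, so $\varphi$ is a cyclic Kummer-type cover of degree $3$ branched over the conjugate pair $3\alpha, 3\bar\alpha \in \fqtwo$; a parallel cube-root argument, now over this quadratic extension, is expected to yield the complementary pattern $+$ for $q \equiv 2 \pmod 3$. The $\co_5'$ and $\co_8$ rows should reduce to elementary square/nonsquare counts via explicit factorisation of the cubics from Theorem \ref{t-lines}. I expect the main technical obstacle to be the $\co_3'$ case: tracking the Galois structure of a cyclic degree-$3$ cover whose branch locus is defined over $\fq$ but whose individual branch points are not requires a careful refinement of the cube-root argument in order to pin down the precise dependence on $q \bmod 3$.
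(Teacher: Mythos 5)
Your overall framework is the paper's own: translate containment in a $3$-plane into the existence of a pencil member cutting $\C_3$ in three distinct rational places, dispose of the degree-$1$ and degree-$2$ rows via the base divisor and Proposition \ref{p-deg=2pijk}, and settle $\co_6$ by the simple-morphism/genus-$1$/Hasse--Weil chain of Corollary \ref{c-elliptic} and Remark \ref{r-elliptic}. Those parts are essentially identical to the paper and are fine, modulo one factual slip: your claim that the normal forms always give $\tilde\varphi(0)=P_\infty$ is false for $\co_4$ and $\co_4^+(2)$, where $\tilde\varphi(x)=x^2$ and $x^2/(x+1)$ send $0$ to $0$. The conclusion survives because in those cases the base point $P_0$ is itself the ramification point, so it cannot lie in an unramified two-point fibre; you should state that as the reason rather than the incorrect one you give.

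The genuine gap is the $\co_3'$ row, which you explicitly leave as an expectation rather than a proof. The paper does not need the Galois-theoretic analysis of a cyclic cover with irrational branch points that you flag as the obstacle; it sidesteps it entirely by working with the double point polynomial. Concretely: using the null-polarity and the sharp $3$-transitivity of $G_{q^2}$ one may normalize so that the rational point under consideration is $x=0$, and then $\Delta_\varphi(0,y)=\frac13(1-n)y^2+ny+n^2$ is an explicit quadratic whose discriminant is $-3(1-4n)n^2/9$. Since $1-4n$ is a non-square by construction, this is a non-zero square exactly when $-3$ is a non-square, i.e.\ exactly when $q\equiv 2\pmod 3$ (with a parallel trace computation for even $q$). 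That closes the case in a few lines and confirms the sign pattern you guessed. A second, smaller gap of the same kind: for $\co_5'$ with $q$ odd you assert the row ``should reduce to'' a square count, but one must actually produce an $x$ for which the discriminant $vx^2(4x^2-3v)$ of $\Delta_\varphi(x,\cdot)$ is a non-zero square avoiding the degenerate values; the paper does this by counting points on the conic $4vx^2-3v^2z^2-u^2=0$, which needs $q>6$. Both gaps are fillable by the same device --- compute $\Delta_\varphi$ explicitly and analyse the discriminant of the resulting quadratic in $y$ --- so you should replace the Kummer-cover sketch with that computation.
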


\begin{proof}The partition of Proposition \ref{t-lines} is used. Here the different cases are considered by increasing degree of $\tilde{\varphi }$.\\
$(1)$ If $\deg(\tilde{\varphi })=1$, then the base divisor has degree $2$ and $\cl $ is a chord or a tangent: $\co_1$, $\co_2$ or $\co_3$.\\
\indent
$\co_1$: Real chords are in $3$-planes, but do not contribute to $\mu_q$,
since the points on these lines contribute already to $a_1(T)$ or $a_2(T)$.\\
\indent
$\co_2=\co_2'$: A plane that contains a tangent line at $P$ of $\C_3(q)$,
intersects $\C_3$ in the divisor $2P+P'$ where $P'$ is another point of $\C_3(q)$.
Hence tangent lines are not contained in a $3$-plane.\\
\indent
$\co_3$: A plane that contains an imaginary chord at $Q$,
intersects $\C_3$ in the divisor $Q+P$ where $P$ is a point of $\C_3(q)$ and $Q$ a place of degree $2$.
Hence imaginary chords are not contained in a $3$-plane.\\
$(2)$ If $\deg(\tilde{\varphi })=2$, then the base divisor is a place $P_1$ of degree $1$ and $\cl $ is a unisecant: $\co_4$ or $\co_5$.\\
\indent
$\co_4^-(2)$: In this case $\tilde{\varphi }$ is purely separable and $\tilde{\varphi }^{-1}(x)$ consists of one point, for all $x$.
Hence there are {\em no} $3$-planes containing $\cl $.\\
In all other subcases of $\co_4$ or $\co_5$ the morphism $\tilde{\varphi }$ is separable by Propositions \ref{t-lines} and \ref{p-deg=2}.
Hence there is an $\fq$-rational points $x$ on $\proj^1$ such that $\tilde{\varphi }^{-1}(x)$
consists of two $\fq$-rational points $P_2(x)$ and $P_3(x)$ which are distinct from $P_1$ by Proposition \ref{p-deg=2pijk}.
So apart from $P_1$, that is in all planes containing $\cl$, there is a $3$-plane that contains $P_1$, $P_2(x)$ and $P_3(x)$.\\
$(3)$ If $\deg(\tilde{\varphi })=3$, then $\varphi= \tilde{\varphi })$ has no base points and $\cl $ is an axis or a passant.
These are the remaining cases of Proposition \ref{t-lines}.\\
\indent
$\co_1'$: real axes with representative rational function  $\varphi(x)=x^3$ and corresponding line $\cl $. So $\Delta_{\varphi}(x,y) = x^2+xy+y^2$\\
If $q=1$ mod $3$, then the double point scheme $\E_{\varphi }$ contains $(x,\omega x)$ and $(x,\omega^2 x)$ with $\omega^3=1$ and $\omega\ne 1$.
Hence there is a $3$-plane containing $\cl $ and the three points $P(x)$, $P(\omega x)$ and $P(\bar{\omega} x)$ if $x\ne0$ and $x\ne \infty $.
So we get a contribution to $\mu_q$.
Furthermore $\E_{\varphi }$ is reducible over $\fq$ containing two components of bidegree $(1,1)$ that intersect in $(0,0)$ and $(\infty, \infty)$.
If $q=-1$ mod 3, then $\E_{\varphi }$ has no $\fq$-rational points except $(0,0)$ and $(\infty, \infty)$ and there is no contribution to $\mu_q$.
$\E_{\cl }$ is irreducible over $\fq$, but reducible over $\fqtwo$ with two components that are conjugate and intersect in $(0,0)$ and $(\infty, \infty)$.\\
\indent
$\co_3'$: Imaginary axes, $p\not=3$, with representative  rational function $\varphi(x)=(x^3-3nx-n)/(x^2+x+\frac13(1-n))$ and corresponding line $\cl $.
where $x^2+x+n$ is irreducible, that is the discriminant $1-4n$ is a non-square if $q$ is odd, and $tr(n)=1$ if $q$ is even.
Then
$$
\textstyle \Delta_{\varphi} (x,y)=x^2y^2+xy(x+y)+\frac13(1-n)(x^2+xy+y^2)+3xy+n(x+y)+n^2.
$$
Let $\xi$ and $\bar{\xi}$ be the roots of $x^2+x+n$.
Consider the line $\cl$ and the point $P(x)$ on $\C_3(q)$.
Under the null-polarity $\cl$ and $P(x)$ are mapped to $\cl'$ and $P'(x)$, respectively,
where $\cl'$ is an imaginary chord of $\bg_3$. So $\cl'$ intersects $\bg_3$ in the conjugate points $P'(\xi)$ and $P'(\bar{\xi})$.
There exists a fractional transformation $\varphi \in G_{q^2}$, that is with coefficients in $\fqtwo $
such that $\varphi (\xi)=\xi $, $\varphi (\bar{\xi})=\bar{\xi} $ and $\varphi (x)=0$.
Then $\bar{\varphi}(x)=0$, $\bar{\varphi}(\bar{\xi})$ is the conjugate of $\varphi(\xi)$ which is $\bar{\xi}$, and similarly
$\bar{\varphi}(\xi)=\xi$. So $\bar{\varphi}=\varphi $, since $G_{q^2}$ acts sharply $3$-transitive on $\proj^1(q^2)$.
Hence $\varphi \in G_q$ and we assume without loss of generality that $x=0$.\\
Now $\Delta_{\varphi} (0,y)= \frac13(1-n)y^2+ny+n^2$.
This quadratic polynomial has discriminant $-3(1-4n)n^2/9$.
If $q$ is odd there are two distinct solutions if $-3$ is a non-square, since $1-4n$ is a non-square.
So there is $3$-plane containing $\cl$ if  $q \equiv 2 \mod 3$, and there is no such $3$-plane if $q \equiv 1 \mod 3$.
If $q$ is even, the quadratic equation becomes $y^2+y+n+1=0$,  and we find a $3$-plane for some $y$ if
the trace of $ac/b^2$ is $0$, where $a=1$, $b=1$ and $c=n+1$. So $tr(n+1)=0$.
Hence  $tr(1)=1$, since $tr(n)=1$. This again is the case if and only if $q \equiv 2 \mod 3$. \\
\indent
$\co_5'$: Passants in an osculating plane, $p\not=3$, with representative rational function  $=x^3/(x^2-v)$ and corresponding line $\cl $. Then
$$
\Delta_{\varphi }(x,y) = x^2y^2-v(x^2+xy+y^2).
$$
We first consider the case that $q$ is odd.  The discriminant of $\Delta_{\varphi }(x,y)$ as polynomial in $y$ is $vx^2(4x^2-3v)$.
This discriminant is a square if and only if $4vx^2-3v^2-u^2=0$ has a $\fq$-rational solution $(x,u)$.
The projective curve with equation $4vx^2-3v^2z^2-u^2=0$ in the variables $x$, $u$ and $z$ with parameter $v$ defines a nonsingular conic
with $q+1$ $\fq$-rational points, with at most $2$ points where $z=0$, at most $2$ points for which $u=0$,
at most $2$ points leading to a solution $x=y$. So for $q>6$ there is an $x\in \fq$ such that the discriminant is a non-zero square
giving two solutions of $\Delta_{\varphi }(x,y)=0$ in $y$ which are distinct from $x$.
So there is a $3$-plane that contain the line $\cl$.\\
\indent
$\co_5'(2)$: is the subclass of $\co_5'$ with $q$ even. In this case $v$ is a square and we can take $v=1$. We want
$\tr(ac/b^2)=0$ with $a=x^2+1$, $b=x$ and $c=x^2$, so $\tr(x^2+1)=0$.
Now the map $x\mapsto x^2+1$ is a bijection, so it has trace $0$ for $\frac12q$ values of $x$.
So we get a contribution to $\mu_q$ and the number of $3$-planes containing $\cl_1$ is  $1+\frac12q$.\\
Hence in all subcases of $\co_5'$ we get a contribution to $\mu_q$.\\
\indent
$\co_6=\co_6'$: true passants not in an osculating plane.
Let $\cl $ be a line in this class. The corresponding rational function $\varphi $ is separable and simple by Proposition \ref{t-lines}.
Hence $\E_{\varphi  }$ is a curve of genus $1$ by Corollary \ref{c-elliptic}.
Furthermore for $q\geq 23$ there exist three mutually distinct elements $x,y,z$ in $\proj^1(q) $
such that $\varphi(x) =\varphi(y)=\varphi (z)$ by Remark \ref{r-elliptic}.
Hence $P(x)$, $P(y)$ and $P(z)$ determine a $3$-plane containing $\cl $.\\
\indent
$\co_7(3)$: The axis of $\bg_3$, $p=3$. The pencil of planes containing the axis consists of all osculating planes.
Hence the axis does not lie on a $3$-plane.\\
\indent
$\co_8(3)$: Passants meeting the axis, $p=3$. This class has three orbits:
\indent
$\co_{8.1}(3)$: The representative rational function is $\varphi(x)=x^3-ux$ with corresponding line $\cl $.
Then  $\Delta_{\varphi }(x,y)=x^2+xy+y^2-u$ and $\Delta_{\varphi }(x,y)=3x^2-u=-u\ne 0$, since $p=3$.
The discriminant of $\Delta_{\varphi }(x,y)$ as polynomial in $y$ is $x^2-4(x^2-u)=u$.\\
\indent
$\co_{8.1}(3)^-$: This is the subcase with $u$ a non-square. Hence $\Delta_{\varphi }(x,y)=0$ has no solutions in $y$ for all $x$.
The point  $x=\infty $ corresponds with a plane tangent to $\C_3$ at $P(\infty)$, which is not a $3$-plane.
Hence there are no $3$-planes containing $\cl $.\\
\indent
$\co_{8.1}(3)^+$: This is the subcase with $u=1$ a non-zero square.
Then the discriminant is $1$.
Hence there are two solutions $y=x\pm1$ which are distinct from $x$.
Therefore there are $3$-planes containing $\cl $.\\
\indent
$\co_{8.2}(3)$: The representative rational function is $\varphi(x)=x^3-x^2$ with corresponding line $\cl $.
Then  $\Delta_{\varphi }(x,y)=x^2+xy+y^2-x-y$ and $\Delta_{\varphi }(x,x)=3x^2-2x=x$, since $p=3$. So, if $\Delta_{\varphi }(x,x)=0$, then $x=0$.
The discriminant of $\Delta_{\cl}(x,y)$ as polynomial in $y$ is  $(x-1)^2-4(x^2-x)=1-x$ must be a non-zero-square.
If $q>3$, then there is a $x\in \fq\setminus \{0,1 \}$ such that $1-x $ is a non-zero square, and $\Delta_{\varphi }(x,y)=0$ has two solutions in $y$ not equal to $x$.
Hence there is a $3$-plane containing $\cl $.
\end{proof}

\begin{rem}
Theorem \ref{p-table} was enough to solve our problem, that is to know wether a line of a given class is contained in a $3$-plane or not.
In \cite[Theorem 3.3]{davydov:2021a} a more detailed result is given: \\
a) For all classes of lines, including $\co_6$, the exact number is computed  of lines of a given class that are contained in a plane of a given class.\\
b) For all classes of lines, apart from $\co_6$, the exact number is computed of planes of a given class through a line of a given class.\\
c) For the lines of $\co_6$,  the average number is computed of planes of a given class through a line of $\co_6$.\\
So in fact for all cases, apart from $\co_6$ we could have referred to \cite[Theorem 3.3]{davydov:2021a} instead.
\end{rem}

\begin{rem}\label{r-permutation-fct-deg3}
The permutation rational functions of degree $3$ are classified in \cite{ferraguti:2020}.
There are $6$ of them and they confirm the findings in the table of Proposition \ref{p-table}:
$\co_3'$ for $q \equiv 1 \mod 6$, $\co_1'$ for $q \equiv 2 \mod 6$,
$\co_7(3)$ and $\co_{8.1}(3)^-$ for $q \equiv 3 \mod 6$,
$\co_3'$ for $q \equiv 4 \mod 6$, and $\co_1'$ for $q \equiv 5 \mod 6$.
\end{rem}

We summarize our findings in the following.

\begin{thm}\label{t-muq}
If $q\geq 23$, then
\[
\mu_q=\left\{\begin{array}{lll}
  q^4+q^3+\frac12q^2+\frac12q     &\text{if} & q=1\mod6 \\[4pt]
  q^4+q^3+\frac12q^2-\frac32q-1   &\text{if} & q=2\mod6 \\[4pt]
  q^4+q^3+\frac12q^2-\frac12                     &\text{if} & q=3\mod6 \\[4pt]
  q^4+q^3+\frac12q^2-\frac12q-1   &\text{if} & q=4\mod6 \\[4pt]
  q^4+q^3+\frac12q^2-\frac12q     &\text{if} & q=5\mod6 \\[4pt]
  \end{array}\right.
\]
\end{thm}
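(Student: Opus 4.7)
The proof is essentially a bookkeeping exercise, summing the sizes listed in Theorem \ref{t-lines} across the rows marked with a plus sign in the table of Proposition \ref{p-table}, for each residue $q \bmod 6$. Recall that $\mu_q$ counts the lines of $\proj^3(q)$ that are not real chords and that lie in at least one $3$-plane, and Proposition \ref{p-table} has already identified exactly which orbits under $G_q$ contribute such lines, together with their cardinalities.

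The plan is therefore to split into the five residue classes $1,2,3,4,5 \bmod 6$ (note that $q$ a prime power excludes $q \equiv 0 \bmod 6$) and, in each case, collect the contributing classes. For $q \equiv 1 \bmod 6$ (odd, $p \neq 3$, $q \equiv 1 \bmod 3$) the contributors are $\co_1'$, $\co_4$, $\co_5^\pm$, $\co_5'^\pm$ and $\co_6$; for $q \equiv 5 \bmod 6$ (odd, $p \neq 3$, $q \equiv 2 \bmod 3$) replace $\co_1'$ by $\co_3'$; for $q \equiv 3 \bmod 6$ (so $p = 3$) we use $\co_4$, $\co_5^\pm$, $\co_6$, $\co_{8.1}^+(3)$ and $\co_{8.2}(3)$; for $q \equiv 2,4 \bmod 6$ ($q$ even), we use $\co_{3}'$ or $\co_1'$ (for $q\equiv 2$ or $4$ respectively), $\co_4^+(2)$, $\co_5(2)$, $\co_5'(2)$ and $\co_6$. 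In each case I would write out the sum explicitly and simplify. For instance, when $q\equiv 1 \bmod 6$ one gets
\[
\mu_q = \tfrac12(q^2+q) + (q^2+q) + (q^3-q) + (q^3-q) + (q^4-q^3-q^2+q),
\]
which collapses to $q^4 + q^3 + \tfrac12 q^2 + \tfrac12 q$.

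The only non-routine point is to make sure that the sizes for the split orbits ($\co_5^\pm$ and $\co_5'^\pm$) are correctly combined: each of the two half-orbits has size $\tfrac12 q(q^2-1)$, so in odd characteristic (for $\co_5$) and in odd characteristic with $p \neq 3$ (for $\co_5'$), both sub-orbits contribute and their total is $q^3 - q$; in even characteristic the single orbits $\co_5(2)$ and $\co_5'(2)$ already have size $q^3 - q$. This matches the table entries. The condition $q \geq 23$ enters only through the contribution of $\co_6$, whose relevance was established in Proposition \ref{p-table} via the Hasse–Weil bound on the double point scheme; once $\co_6$ is known to contribute, its full size $q^4 - q^3 - q^2 + q$ is used.

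There is no real obstacle here beyond arithmetic care: the five arithmetic identities must be verified one by one. The mild potential pitfall is getting the $\co_{8.1}^+(3)$ count right in characteristic $3$ (its size $\tfrac12(q^2-1) = \tfrac12 q^2 - \tfrac12$ produces the anomalous constant $-\tfrac12$ in the $q \equiv 3 \bmod 6$ formula) and remembering that $\co_7(3)$ and $\co_{8.1}^-(3)$ do \emph{not} contribute. Once these are handled, the theorem follows immediately from Proposition \ref{p-table} and Theorem \ref{t-lines}.
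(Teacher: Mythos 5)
Your proposal is correct and follows exactly the paper's own proof, which likewise just sums the sizes from Theorem \ref{t-lines} over the rows marked with a plus sign in the table of Proposition \ref{p-table} for each residue of $q$ modulo $6$. Your identification of the contributing classes in each of the five cases and the resulting arithmetic both check out against the stated formulas.
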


\begin{proof}
This follows from Proposition \ref{p-table} by adding up the sizes of the corresponding entries in the second column
if  there is a plus sign in the corresponding row and column of $i\mod 6$.
\end{proof}

\section{Conclusion}

The extended coset leader weight enumerator of the generalized Reed-Solomon $[q+1,q-3,5]_q$ code is computed for $q\geq 23$.
For this we need to refine the known classification \cite{bruen:1977,hirschfeld:1985} of the points, lines and planes in the projective three space under the action of projectivities that leave the twisted cubic invariant. The given classification is complete except for the class  $\co_6$ of true passants not in an osculating plane.
The refined classification and the line-plane incidence, apart from $\co_6$ are also obtained in \cite{davydov:2021a,davydov:2021b}. \\
The relation between codimension $2$ subspaces of $\proj^r$ and rational functions of degree at most $r$ is given.\\
Furthermore the double point scheme $\E_{\varphi} $ of a rational function $\varphi$ is studied in general.
If the rational function $\varphi$ is a separable simple morphism of degree $d$, then $\E_{\varphi} $ is an absolutely irreducible  curve of genus $(d-1)^2$.
In particular, the pencil of a true passant of the twisted cubic, not in an osculating plane  gives a curve of genus $1$ as double point scheme.\\
In order to compute the (extended) list weight enumerator \cite{jurrius:2015} of this code is beyond the scope of this article,
since one needs to know the distribution of the numbers of $\fq $-rational points of the double point schemes of all the passants not in an osculating plane.
The complete classification of all orbits of lines will be given in an upcoming article.

\section{Acknowledgment}
We found out that the main result of the classification of Theorem \ref{t-lines} on lines in three space over finite fields
and Proposition \ref{p-table} on the line-plane incidence, with the exception $\co _6$  was also obtained in recent papers by \cite{davydov:2021a,davydov:2021b}
that were submitted one week earlier on arXiv than this paper. We thank the authors for showing us their work.\\
The third author was partly supported by Grant K 124950 of the Hungarian National Research, Development
and Innovation Fund. In his case, also the "Application Domain Specific Highly Reliable IT Solutions” project  has  been implemented with the support provided from the National Research,  Development and Innovation Fund of Hungary, financed under the Thematic  Excellence Programme TKP2020-NKA-06 (National Challenges Subprogramme) funding scheme.

\bibliographystyle{plain}

\end{document}